%---------------------------------------------------------------------------------------------------
\documentclass[12pt]{amsart}

\usepackage[latin1]{inputenc}
\usepackage{amsfonts,amssymb,amsmath}
\usepackage{verbatim}
\input{xypic}
\usepackage[all,cmtip]{xy}
\textwidth=5.35in
\textheight=8.3in
%\voffset=-2.0cm\hoffset=-2.0cm
%---------------------------------------------------------------------------------------------------

\newtheorem{theorem}{Theorem}[section]
\newtheorem{proposition}[theorem]{Proposition}
\newtheorem{lemma}[theorem]{Lemma}
\newtheorem{corollary}[theorem]{Corollary}
\newtheorem{definition}[theorem]{Definition}

\theoremstyle{remark}
\newtheorem{remark}[theorem]{Remark}
%\newenvironment{proof}{\noindent\textsc{Proof:}\ }{\hfill$\square$\vskip1em}

%\def\sign{\operatorname{sign}}

% jqdefs
%--------------------------------------------------------------------------------------------------
\renewcommand\leq{\leqslant}\renewcommand\geq{\geqslant}
%--------------------------------------------------------------------------------------------------
\newcommand\Z{\ensuremath{\mathbb Z}}
\newcommand\Q{\ensuremath{\mathbb Q}}\newcommand\R{\ensuremath{\mathbb R}}
\newcommand\C{\ensuremath{\mathbb C}}
\newcommand\Qb{{\overline\Q}}\newcommand\Fb{{\bar F}}

%--------------------------------------------------------------------------------------------------

\newcommand{\ra}{{\rightarrow}}
\newcommand{\lra}{\longrightarrow}

\newcommand\M{\operatorname{M}}

\newcommand\End{\operatorname{End}}

\newcommand\GL{\operatorname{GL}}
%\newcommand\GL{{\bf GL}}

%--------------------------------------------------------------------------------------------------
% s\'{\i}mbol de Legendre

%--------------------------------------------------------------------------------------------------
% enters m\`{o}dul n i grup multiplicatiu

%--------------------------------------------------------------------------------------------------

%--------------------------------------------------------------------------------------------------
\newcommand\acc[2]{\ensuremath{{}^{#1}\hskip-0.1ex{#2}}}

%---------------------------------------------------------------------------------------------------
% fi jqdefs

\def\cA{\mathcal A}\def\cB{\mathcal B}\def\cD{\mathcal D}
\def\Fb{{\overline F}}\def\kb{{\bar k}}

\newcommand{\gp}{{\mathfrak{p}}}

\newcommand{\gP}{{\mathfrak{P}}}

\newcommand{\comp}{\begin{picture}(6,5)(-3,-2)\put(0,1){\circle{2}} \end{picture}}\def\circ{\comp}

%\newtheorem{remark}[thm]{Remark}
%\newdefinition{rmk}{Remark}
%\newproof{pf}{Proof}

%---------------------------------------------------------------------------------------------------

\title[\Tiny{Abelian varieties with many endomorphisms and their simple factors}]{Abelian varieties with many endomorphisms  and their absolutely simple factors}
%\title[Absolutely simple factors of $\GL_2$-type varieties]{Absolutely simple factors of $\GL_2$-type varieties}

\author{Xavier Guitart}
\subjclass[2000]{11G10, 14K15.}
\address{ Departament de Matem\`atica Aplicada II,
 Universitat Polit\`ecnica de Ca\-ta\-lu\-nya,
Jordi Girona 1-3 (Edifici Omega) 08034, Barcelona}
\thanks{Partially supported by Grants MTM2009-13060-C02-01 and 2009 SGR 1220.}
\email{xevi.guitart@gmail.com}

\keywords{Abelian varieties of $\GL_2$-type, $k$-varieties,
building blocks.}

\date{\today}
%---------------------------------------------------------------------------------------------------
\begin{document}
%---------------------------------------------------------------------------------------------------
\begin{abstract}
We characterize the abelian varieties arising as absolutely simple factors
of $\GL_2$-type varieties over a number field $k$. In order to obtain this
result, we study a wider class of abelian varieties: the $k$-varieties $A/k$
satisfying that $\End_k^0(A)$ is a maximal subfield of $\End_\kb^0(A)$. We
call them \emph{Ribet-Pyle varieties} over $k$. We see that every Ribet-Pyle
variety over $k$ is isogenous over $\kb$ to a power of an abelian
$k$-variety and, conversely, that every abelian $k$-variety occurs as the
absolutely simple factor of some Ribet-Pyle variety over $k$. We deduce from
this correspondence a precise description of the absolutely simple factors
of the varieties over $k$ of $\GL_2$-type.
\end{abstract}
%---------------------------------------------------------------------------------------------------

\maketitle

\section{Introduction}Let $k$ be a number field.  An abelian variety $A$ over  $k$ is said to be of $\GL_2$-type if
its algebra of $k$-endomorphisms $
\End_k^0(A)=\Q\otimes_\Z\End_k(A)$ is a number field of degree
equal to the dimension of $A$. The aim of this note is to
characterize the abelian varieties over $\kb$ that arise as
absolutely simple factors of  $\GL_2$-type varieties over $k$.

The interest in abelian varieties over $\Q$ of $\GL_2$-type arose in
connection with the Shimura-Taniyama conjecture on the modularity of
elliptic curves over $\Q$, and its generalization to higher dimensional
modular abelian varieties over $\Q$. To be more precise, to each  $A/\Q$ of
$\GL_2$-type is attached a compatible system of $\lambda$-adic
representations $\rho_{A,\lambda}\colon G_\Q\ra \GL_2(E_\lambda)$, where
$E=\End_\Q^0(A)$ and the $\lambda$'s are primes of $E$. As a consequence of
Serre's conjecture on Galois representations these $\rho_{A,\lambda}$ are
modular; that is, there exists a newform $f\in S_2(\Gamma_1(N))$ such that
$\rho_{A,\lambda}\simeq \rho_{f,\lambda}$ for all primes $\lambda$ of $E$,
where $\rho_{f,\lambda}$ is the $\lambda$-adic representation attached to
$f$ (see \cite{ribet-avQ} for the details).

%K. Ribet characterized in \cite{ribet-avQ} the one dimensional
%absolutely simple factors of $\GL_2$-type varieties over $\Q$ as
%the elliptic $\Q$-curves. Recall that an elliptic curve $C/\Qb$ is
%said to be a $\Q$-curve if it is isogenous to all their Galois
%conjugates.

The study of the $\Qb$-simple factors of $\GL_2$-type varieties over $\Q$
was initiated by K. Ribet in \cite{ribet-avQ}, in which the one-dimensional
factors where characterized: they are the elliptic curves $C/\Qb$ that are
isogenous to all their Galois conjugates, also known as elliptic
$\Q$-curves. This result was completed by Ribet's student E. Pyle in her PhD
thesis \cite{pyle}, where she characterized the higher dimensional
$\Qb$-simple factors  as a certain type of abelian $\Q$-varieties called
building blocks. More concretely, an abelian variety $B/\Qb$ is  an
\emph{abelian $\Q$-variety} if it is $\End_\Qb(B)$-equivariantly isogenous
to all of its Galois conjugates; this means that  for each $\sigma\in G_\Q$
there exists an isogeny $\mu_\sigma\colon \acc\sigma B\ra B$ such that
$\varphi\circ\mu_\sigma=\mu_\sigma \circ\acc\sigma\varphi$ for all
$\varphi\in \End_\Qb(B)$. A \emph{building  block} is an abelian
$\Q$-variety $B$ whose endomorphism algebra is a central division algebra
over a totally real field $F$, with Schur index $t\leq 2$ and reduced degree
$t[F:\Q]=\dim B$. The following statement is Proposition 1.3 and Proposition
4.5 of \cite{pyle}.
\begin{theorem}[Ribet-Pyle]\label{theorem: Ribet-Pyle}
Let $A/\Q$ be an abelian variety of $\GL_2$-type such that $A_\Qb$ does not
have complex multiplication. Then $A_\Qb$ decomposes up to $\Qb$-isogeny as
$A_\Qb\sim B^n$ for some building block $B/\Qb$. Conversely, if $B/\Qb$ is a
building block then there exists a $\GL_2$-type variety $A/\Q$ such that
$A_\Qb\sim B^n$ for some $n$.
\end{theorem}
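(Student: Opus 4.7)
The plan is to prove the two directions separately, the pivot in each case being an analysis of how the $\Q$-endomorphism algebra $E=\End_\Q^0(A)$ sits inside $\End_\Qb^0(A)$ after passing to an absolutely simple factor. The link between the two directions is provided by the $\Q$-variety structure on such a factor $B$, encoded by isogenies $\mu_\sigma\colon\acc\sigma B\to B$ and the associated twisted Galois action on $D=\End_\Qb^0(B)$.

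For the forward direction, let $A/\Q$ be of $\GL_2$-type, non-CM, with $E=\End_\Q^0(A)$. Since $E$ is a field, $A$ is $\Q$-simple, so the absolutely simple $\Qb$-factors of $A_\Qb$ form a single $G_\Q$-orbit: $A_\Qb\sim\prod_{i=1}^{r}(\acc{\sigma_i}B)^n$ with $B$ absolutely simple and the $\acc{\sigma_i}B$ pairwise non-isogenous. Letting $H$ be the stabilizer in $G_\Q$ of the isogeny class of $B$ and $D=\End_\Qb^0(B)$, we have $\End_\Qb^0(A)\cong\prod_i\M_n(\acc{\sigma_i}D)$, and $E$ is its $G_\Q$-fixed subalgebra, canonically identified with the $H$-fixed subalgebra of $\M_n(D)$ under the twisted action $\varphi\mapsto\mu_h\circ\acc h\varphi\circ\mu_h^{-1}$. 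The first crucial step is to deduce from the non-CM hypothesis that $r=1$: by Albert's classification the center $F=Z(D)$ is then totally real, and a dimension/character count shows that the $H$-fixed subalgebra of $\M_n(D)$ can be a field of degree $rn\dim B$ only when the orbit is trivial; it is precisely in the CM case that non-isotypic $\GL_2$-type examples (such as restrictions of scalars of CM elliptic curves) arise. Once $A_\Qb\sim B^n$ is in hand, the isogenies $\mu_\sigma$ realize $B$ as an abelian $\Q$-variety, and matching $[E:\Q]=n\dim B$ against the structure of $\M_n(D)^{G_\Q}$ forces $D$ to be a central division algebra over $F$ with Schur index $t\leq 2$ and $t[F:\Q]=\dim B$, i.e., $B$ is a building block.

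For the converse, given a building block $B/\Qb$ with endomorphism algebra $D$ of center $F$, the $\Q$-variety structure produces a $2$-cocycle $c\in H^2(G_\Q,F^*)$ via $\mu_\sigma\circ\acc\sigma\mu_\tau=c(\sigma,\tau)\mu_{\sigma\tau}$. I would realize $c$ (after tensoring with a suitable splitting field) as the cocycle of a projective representation $\varrho\colon G_\Q\to\PGL_n(D^{\op})$ for some $n$, and use $\varrho$ to twist the component-wise Galois action on $B^n$ so that the obstruction to descent is killed. Weil's descent criterion then yields $A/\Q$ with $A_\Qb\sim B^n$, and a direct computation shows $\End_\Q^0(A)$ to be a maximal subfield of $\M_n(D)$ of degree $\dim A$, so that $A$ is of $\GL_2$-type.

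The chief technical obstacle is the cocycle/descent step in the converse: producing the correct value of $n$, constructing the projective representation realizing $c$, and verifying Weil's criterion all require delicate computations in the Brauer group of $F$ and careful use of the hypothesis that the Schur index of $D$ divides $2$. In the forward direction, the subtle point is the rigorous deduction $r=1$ from Albert's classification under the non-CM hypothesis, since the CM case genuinely admits non-isotypic $\GL_2$-type varieties and one must isolate exactly what fails there.
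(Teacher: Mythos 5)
Your architecture coincides with the paper's (the paper does not reprove this theorem of Ribet--Pyle directly, but proves its generalization to number fields $k$, and your plan is exactly that proof specialized to $k=\Q$): forward direction by analyzing how $E=\End_\Q^0(A)$ sits inside $\End_\Qb^0(A)$, converse by splitting the $2$-cocycle attached to the $\Q$-variety structure of $B$ and descending a twisted form of $B^n$. However, at the two places you yourself single out as delicate, the proposal does not supply the idea that actually makes the step work.

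In the forward direction, your route to $r=1$ (Albert's classification of $Z(D)$ plus an unspecified ``dimension/character count'' on the $H$-fixed subalgebra of $\M_n(D)$) is not carried out, and note that a priori $E$ is only a commutative $\Q$-subalgebra of $\M_n(D)$, not an $F$-subalgebra, so the bound $[E:F]\le nt$ for subfields of a central simple $F$-algebra is not directly available. The argument that works, and the one used in Proposition \ref{proposition: GL2-type varieties are ribet-pyle}, is more elementary: the field $E$ acts on each isotypic component $B_i^{n_i}$ of $A_\Qb$, hence on $H_1((B_i^{n_i})_\C,\Q)$, so $[E:\Q]=\dim A$ divides $2\dim B_i^{n_i}\leq 2\dim A$; the possibility $[E:\Q]=2\dim B_i^{n_i}$ is excluded because that component would then have CM by $E$, so $\dim B_i^{n_i}=\dim A$ for every $i$ and there is a single component. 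The same divisibility argument (applied to images $\varphi(A_\Qb)$ for $\varphi$ in the centralizer of $E$) is what yields the maximal-subfield property, and then $t\le 2$ and $t[F:\Q]=\dim B$ fall out of degree counts; you should make explicit that this, rather than Albert's classification, is the mechanism.

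In the converse, the step ``realize $c$ as the cocycle of a projective representation'' is precisely the non-formal point, and naming it an obstacle is not the same as overcoming it. The essential missing ingredient is Tate's theorem that $H^2(G_\Q,\Qb^*)=\{1\}$ for the trivial action: this is what guarantees a continuous splitting map $\beta\colon G_\Q\to\Qb^*$ for $c$, with values generating a finite extension $E_\beta/F$. One then needs more: $\beta$ must be chosen (by twisting with a suitable character, using Grunwald--Wang and Kronecker--Weber to produce a cyclotomic splitting field $F(\zeta_m)$ of $D$ inside $E_\beta$) so that $E_\beta$ \emph{splits} $D$; otherwise the descended variety $X$ has $\End_\Q^0(X)\simeq E_\beta\otimes_F D$, which need not be a matrix algebra over a field, and no $\GL_2$-type factor $A$ with $X\sim_\Q A^t$ can be extracted. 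Note also that what is used here is the relation $t[F:\Q]=\dim B$ (to get $[\End_\Q^0(A):\Q]=\dim A$), not really the bound $t\mid 2$; and the descent is up to isogeny, so it is Ribet's isogeny-descent criterion rather than Weil's criterion verbatim. With these ingredients inserted, your plan closes up into the paper's proof of Theorem \ref{theorem: Ribet-Pyle variety associated to a k-variety} and its application in Section \ref{section: Varieties over $k$ of GL_2-type and k-varieties}.
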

Observe that this result establishes a correspondence between
abelian varieties of $\GL_2$-type over $\Q$ without CM and
building blocks. In the last chapter of Pyle's thesis, a series of
questions were posed about whether a similar correspondence holds
for $\GL_2$-type varieties over other fields $k$. The  goal of
this note is to establish such correspondence when $k$ is a number
field. In this case, the analogous of building blocks are abelian
$k$-varieties (that is, varieties $B/\kb$ equivariantly isogenous
to $\acc\sigma B$ for all $\sigma\in G_k$) whose endomorphism
algebra is a central division algebra over a field $F$ with Schur
index $t\leq 2$ and $t[F:\Q]=\dim B$. We call these varieties
\emph{building $k$-blocks}. We prove in Section \ref{section:
Varieties over $k$ of GL_2-type and k-varieties} that every
$\GL_2$-type variety $A/k$ such that $A_\kb$ does not have CM is
$\kb$-isogenous to the power of a building $k$-block. Conversely,
every building $k$-block arises as the $\kb$-simple factor of some
variety over $k$ of $\GL_2$-type. In other words, we construct a
correspondence
\begin{equation}\label{equation: correspondence between GL_2-type varieties and building k-blocks}
\frac{\{\text{$A/k$ of $\GL_2$-type without CM}
\}}{\text{$k$-isogeny}}\longleftrightarrow \frac{\{\text{building
$k$-blocks $B/\kb$}\}}{\text{$\kb$-isogeny}}.\end{equation} This
can be seen as a natural generalization of the results of Ribet
and Pyle to a wider class of abelian varieties. Moreover, it is
worth noting that varieties over $k$ of $\GL_2$-type play a
similar role as their counterparts over $\Q$ with respect to
modularity: they are conjectured to be modular, at least when $k$
is totally real, in a similar sense as they are known to be
modular for $k=\Q$. Indeed, if $A/k$ is of $\GL_2$-type and $k$ is
a totally real number field, a generalization of the
Shimura-Taniyama conjecture predicts the existence of a Hilbert
modular form $f$ such that $\rho_{A,\lambda}\simeq
\rho_{f,\lambda}$ for all primes $\lambda$ of $E=\End_k^0(A)$. See
\cite[Conjecture 2.4]{darmon} for a precise statement.

Observe that in correspondence \eqref{equation: correspondence between
GL_2-type varieties and building k-blocks} the objects in the right hand
side are $k$-varieties whose endomorphism algebra satisfies certain
conditions. Instead of proving \eqref{equation: correspondence between
GL_2-type varieties and building k-blocks} directly, what we do is to
construct as a previous step a more general correspondence, in which the
right hand side is enlarged to all abelian $k$-varieties. As we will see,
the varieties that correspond to them in the left hand side  are then
varieties $A/k$ characterized by the fact that $A_\kb$ is a $k$-variety and
$\End_k^0(A)$ is a maximal subfield of $\End_\kb^0(A)$. We call the
varieties satisfying these properties \emph{Ribet-Pyle varieties}, because
they arise naturally in this generalization of the results of Ribet and
Pyle. Section \ref{section: Ribet-Pyle varieties} is devoted to the study of
Ribet-Pyle varieties and their absolutely simple factors, and we obtain the
following main result.
\begin{theorem}\label{theorem: main theorem}
Let $k$ be a number field and let $A/k$ be a Ribet-Pyle variety. Then
$A_\kb$ decomposes up to $\kb$-isogeny as $A_\kb\sim B^n$ for some abelian
$k$-variety $B/\kb$. Conversely, if $B/\kb$ is a $k$-variety then there
exists a Ribet-Pyle variety $A/k$ such that $A_\kb\sim B^n$ for some $n$.
\end{theorem}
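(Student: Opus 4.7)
For the forward direction, write $A_\kb\sim \prod_{i=1}^r B_i^{n_i}$ with the $B_i$ pairwise non-$\kb$-isogenous absolutely simple, so that
$$\mathcal{B}:=\End_\kb^0(A)\;\cong\; \prod_{i=1}^r \Mat_{n_i}(D_i),\qquad D_i:=\End_\kb^0(B_i).$$
The hypothesis that $A_\kb$ is a $k$-variety translates, via the equivariant isogenies $\mu_\sigma$, into the statement that the Galois action on $\mathcal{B}$ is by inner automorphisms. In particular $G_k$ fixes the center of $\mathcal{B}$ pointwise, and hence does not permute the primitive central idempotents. Since $E:=\End_k^0(A)=\mathcal{B}^{G_k}$ is a field by hypothesis, this forces $r=1$, so $A_\kb\sim B^n$ with $B:=B_1$ absolutely simple and $\mathcal{B}=\Mat_n(D)$ for $D:=\End_\kb^0(B)$. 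The maximal-subfield condition then pins down $[E:F]=nt$, where $F$ is the center of $D$ and $t$ is its Schur index. Finally, $B$ is itself an abelian $k$-variety: from the inner description $\sigma(x)=u_\sigma x u_\sigma^{-1}$ on $\Mat_n(D)$, restricting the action of $u_\sigma$ to a rank-one idempotent produces an isogeny $\acc\sigma B\to B$ commuting with all of $D$, which is the defining property.

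For the converse, start with an abelian $k$-variety $B/\kb$ whose endomorphism algebra $D=\End_\kb^0(B)$ is a central division algebra over its center $F$. The equivariant isogenies $\mu_\sigma\colon \acc\sigma B\to B$ fail to be a strict cocycle by a scalar
$$c(\sigma,\tau):=\mu_\sigma\cdot\acc{\sigma}{\mu_\tau}\cdot\mu_{\sigma\tau}^{-1}\in F^\times,$$
giving an obstruction class $[c]\in H^2(G_k,F^\times)$ in direct parallel with Pyle's analysis of building blocks over $\Q$. The plan is to choose an integer $n$ together with a maximal subfield $L\subseteq \Mat_n(D)$ for which $[c]$ becomes trivial once inflated to $H^2(G_k,L^\times)$, and then to apply Galois descent to the twisted datum $(\mu_\sigma,L)$ to produce a variety $A/k$ with $\End_\kb^0(A)\cong \Mat_n(D)$ and $\End_k^0(A)\cong L$. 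By construction $A_\kb\sim B^n$, and the maximality of $L$ in $\Mat_n(D)$ makes $A$ a Ribet-Pyle variety.

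The main obstacle is the converse: producing the subfield $L$ and verifying that $[c]$ pulls back to zero in $H^2(G_k,L^\times)$ is where all the number-theoretic input enters. The standard tool is the description of $\Br(k)$ via its local invariants, combined with the freedom to enlarge $n$ by a multiple of the order of $[c]$ so as to split the cocycle. Once such an $L$ is in hand the descent is formal. The forward direction, by contrast, is a purely algebraic analysis of the $G_k$-action on $\mathcal{B}$ combined with the dichotomy coming from the innerness of that action.
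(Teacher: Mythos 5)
Your forward direction is essentially the paper's argument: since $A$ is defined over $k$, the compatible isogenies $\mu_\sigma$ lie in $\End_\kb^0(A)^*$, so the Galois action on $\End_\kb^0(A)$ is inner, the center is fixed pointwise, and the center injects into the field $\End_k^0(A)$, forcing it to be a field and hence $A_\kb\sim B^n$. The only step you gloss over is the last one: cutting $u_\sigma$ down by a rank-one idempotent gives an isogeny $\acc\sigma B\to B$ that a priori is compatible only with the \emph{center} of $\End_\kb^0(B)$, not with all of it; the paper repairs this with Skolem--Noether, replacing $\nu_\sigma$ by $\alpha_\sigma\circ\nu_\sigma$. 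That half is fine modulo this correction.

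The converse, however, is a statement of intent rather than a proof, and the two points you defer are exactly where the content of the theorem lies. First, the existence of any splitting map for $[c]\in H^2(G_k,F^*)$ rests on Tate's theorem that $H^2(G_k,\Fb^*)=\{1\}$ for the \emph{trivial} $G_k$-action on $\Fb^*$; this is not the local-invariant description of $\Br(k)$ (the class $[c]$ does not live in a Brauer group, precisely because the action is trivial), so the vanishing you invoke is the wrong one. Second, and more seriously, it does not suffice that $[c]$ die in $H^2(G_k,L^\times)$: the field $L=E_\beta$ generated by the values of a splitting map must in addition \emph{split the division algebra} $D=\End_\kb^0(B)$. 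Otherwise the descended variety $X$ has $\End_k^0(X)\cong E_\beta\otimes_F D$, a matrix algebra over a nontrivial division algebra, its $k$-simple isogeny factor has a noncommutative algebra of $k$-endomorphisms, and no Ribet--Pyle variety is produced. Arranging both conditions simultaneously is the actual work: the paper first shows, via Grunwald--Wang and Kronecker--Weber, that $D$ is split by some $F(\zeta_m)$, and then shows that the splitting map can be twisted by a character of $G_k$ so that $E_\beta\supseteq F(\zeta_m)$. Neither ``the freedom to enlarge $n$ by a multiple of the order of $[c]$'' nor ``once such an $L$ is in hand the descent is formal'' substitutes for these arguments; and even the descent step requires verifying Ribet's cocycle criterion $\nu_\sigma\circ\acc\sigma\nu_\tau\circ\nu_{\sigma\tau}^{-1}=1$ for $\nu_\sigma=\phi(\beta(\sigma))^{-1}\circ\hat\mu_\sigma$, a computation you have not carried out.
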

This result  gives some insight into the nature of the correspondences of
Theorem \ref{theorem: Ribet-Pyle} and its generalization \eqref{equation:
correspondence between GL_2-type varieties and building k-blocks}. Indeed,
what we do  in Section \ref{section: Varieties over $k$ of GL_2-type and
k-varieties} is to  prove that varieties over $k$ of $\GL_2$-type without CM
are Ribet-Pyle varieties, and then we obtain \eqref{equation: correspondence
between GL_2-type varieties and building k-blocks} by applying  Theorem
\ref{theorem: main theorem} to $\GL_2$-type varieties.

\section{Ribet-Pyle varieties}\label{section: Ribet-Pyle varieties}
Let $k$ be a number field. In this section we establish and prove the
correspondence between abelian $k$-varieties and  Ribet-Pyle varieties of
Theorem \ref{theorem: main theorem}. We begin by giving the relevant
definitions.
\begin{definition}\label{definition: abelian k-varieties}
An abelian variety $B/\kb$ is  an \emph{abelian $k$-variety} if for each
$\sigma \in G_k$ there exists an isogeny $\mu_\sigma\colon \acc\sigma B\ra
B$ compatible with the endomorphisms of $B$; i.e., such that for all
$\varphi\in \End_\kb(B)$ the following diagram is commutative
\begin{equation}\label{equation: diagram of compatibility of isogenies}
 \xymatrix{ {^\sigma B} \ar[d]_{^\sigma\varphi} \ar[r]^{\mu_\sigma  }
&{B}\ar[d]_{\varphi}\\
{^\sigma B}  \ar[r]^{\mu_\sigma } &{ B} .}
\end{equation}
\end{definition}
\begin{definition}
An abelian variety $A$ defined over $k$ is a \emph{Ribet-Pyle variety}  if
$A_\kb$ is an abelian $k$-variety and $\End_k^0(A)$ is a maximal subfield of
$\End_\kb^0(A)$.
\end{definition}
\begin{remark}
We remark that not all abelian varieties $A$ defined over $k$
satisfy that $A_\kb$ is a $k$-variety. Indeed, although in this
case the identity is an obvious isogeny between $\acc\sigma A $
and $A$, it is not necessarily compatible with $\End_{\kb}(A)$ in
general.
\end{remark}
One of the directions of the correspondence that we aim to establish follows
almost immediately from the definitions.

\begin{proposition}\label{prop: Ribet-Pyle varieties are isotypical}
Let $A/k$ be a Ribet-Pyle variety. Then it decomposes up to $\kb$-isogeny as
$A_\kb\sim B^n$, for some  simple abelian $k$-variety $B$ and some $n$.
\end{proposition}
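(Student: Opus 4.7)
The plan is to exploit the two parts of the Ribet-Pyle hypothesis separately: the field condition on $\End_k^0(A)$ will rule out multiple isogeny classes appearing in the decomposition of $A_\kb$, while the $k$-variety condition will then descend to the unique simple factor.

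Write the isotypical decomposition $A_\kb\sim\prod_{i=1}^r B_i^{n_i}$ with the $B_i$ simple and pairwise non-isogenous over $\kb$, so that $\End_\kb^0(A)\cong\prod_{i=1}^r M_{n_i}(D_i)$ has central idempotents $e_1,\dots,e_r$. The first key observation is that, because $A$ is defined over $k$, the canonical identification $\acc\sigma A_\kb=A_\kb$ lets us view the compatibility isogeny $\mu_\sigma$ as an invertible element of $\End_\kb^0(A)$. The relation $\mu_\sigma\circ\acc\sigma\varphi=\varphi\circ\mu_\sigma$ from Definition~\ref{definition: abelian k-varieties} then reads $\acc\sigma\varphi=\mu_\sigma^{-1}\varphi\mu_\sigma$, so the $G_k$-action on $\End_\kb^0(A)$ is inner, given by conjugation by $\mu_\sigma$. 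Inner automorphisms fix the center of an algebra, so each $e_i$ is Galois-fixed and hence lies in $\End_\kb^0(A)^{G_k}=\End_k^0(A)$. The Ribet-Pyle hypothesis makes this algebra a field, whose only idempotents are $0$ and $1$; since the $e_i$ are nonzero, pairwise orthogonal, and sum to $1$, we are forced to have $r=1$, yielding $A_\kb\sim B^n$ with $B:=B_1$ simple.

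It remains to exhibit a $k$-variety structure on $B$ itself. Fixing an isogeny $A_\kb\sim B^n$, I would view $\mu_\sigma$ as an $n\times n$ matrix with entries in $\Hom_\kb^0(\acc\sigma B,B)$, and apply the compatibility relation $\mu_\sigma\circ\acc\sigma\Phi=\Phi\circ\mu_\sigma$ with $\Phi$ running over the matrix units of $M_n(D)=\End_\kb^0(A)$. Comparing the resulting identities entry by entry forces the off-diagonal entries of $\mu_\sigma$ to vanish and all diagonal entries to coincide with a common morphism $\nu_\sigma\colon\acc\sigma B\to B$ satisfying $\nu_\sigma\circ\acc\sigma\varphi=\varphi\circ\nu_\sigma$ for every $\varphi\in\End_\kb(B)$. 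Invertibility of $\mu_\sigma$ forces $\nu_\sigma\neq 0$, and a nonzero morphism between the simple varieties $\acc\sigma B$ and $B$ is automatically an isogeny; this is the required compatibility data making $B$ an abelian $k$-variety.

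The main conceptual step is the first one: recognizing that $\mu_\sigma$ is an invertible endomorphism of $A_\kb$ and that the $k$-variety condition thereby becomes the assertion that the $G_k$-action on $\End_\kb^0(A)$ is inner. After this the idempotent reduction to a single simple factor is essentially formal. The more technical step is the matrix-entry manipulation establishing that any $k$-variety structure on $A_\kb\sim B^n$ forces $\mu_\sigma$ into scalar form, but from this the compatibility isogenies for $B$ are read off directly.
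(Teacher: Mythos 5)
Your proof is correct, and its first half is essentially the paper's argument in different clothing: both exploit that, since $A$ is defined over $k$, the compatibility isogeny $\mu_\sigma$ lies in $\End_\kb^0(A)$, so the Galois action on $\End_\kb^0(A)$ is inner and hence trivial on the center. The paper concludes that the center $F$ embeds in the field $\End_k^0(A)$ and is therefore a field; you conclude that the central idempotents lie in that field and are therefore trivial. These are interchangeable. Where you genuinely diverge is in the descent of the $k$-variety structure to $B$. The paper takes an essentially arbitrary isogeny $\nu_\sigma\colon\acc\sigma B\to B$ induced by $\mu_\sigma$ under a fixed identification $A_\kb\sim B^n$, observes that $\psi\mapsto\nu_\sigma\circ\acc\sigma\psi\circ\nu_\sigma^{-1}$ is an $F$-algebra automorphism of $\End_\kb^0(B)$, and invokes Skolem--Noether to produce a correcting unit $\alpha_\sigma$ so that $\alpha_\sigma\circ\nu_\sigma$ is compatible. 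You instead feed the matrix units of $M_n(D)$ into the compatibility relation and show that $\mu_\sigma$ is \emph{already} a scalar matrix $\nu_\sigma\cdot I_n$ whose diagonal entry is compatible with $\End_\kb(B)$ — a short computation ($m_{ki}\delta_{jl}=\delta_{ik}m_{jl}$ kills the off-diagonal entries and equates the diagonal ones) that avoids Skolem--Noether altogether. Your route is more elementary and yields the stronger statement that every compatible isogeny of $B^n$ is diagonal; the paper's route is less computational and does not require choosing the identification so carefully, at the cost of the non-constructive correction by $\alpha_\sigma$. Both are complete proofs.
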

\begin{proof}
Let $F$ be the center of $\End_\kb^0(A)$ and let $\varphi$ be an element of
$F$. Since $A_\kb$ is a $k$-variety, for each $\sigma \in G_k$ we have that
\begin{equation}\label{equation: compatible isogenies for a Ribet-Pyle
A}\acc\sigma
\varphi=\mu_\sigma^{-1}\circ\varphi\circ\mu_\sigma,\end{equation} for some
isogeny
 $\mu_\sigma\colon \acc\sigma A_\kb\ra A_\kb$. Since $A$ is defined
over $k$ the isogeny $\mu_\sigma$ belongs  to $\End_\kb^0(A)$. Then
$\acc\sigma\varphi=\varphi$ because $\varphi$ belongs to the center of
$\End_\kb^0(A)$. This gives the inclusion $F\subseteq \End_k^0(A)$. By
hypothesis $\End_k^0(A)$ is a field, so $F$ is a field as well and this
implies that $A_\kb\sim  B^n$ for some simple variety $B$ and some $n$.
Next, we show that $B$ is a $k$-variety. By fixing an isogeny $A_\kb\sim
B^n$ the center of $\End_\kb^0(B)$ can be identified with $F$, and each
compatible isogeny $\mu_\sigma\colon \acc\sigma A_\kb\ra A_\kb$ gives rise
to an isogeny $\nu_\sigma\colon \acc\sigma B\ra B$. The relation
\eqref{equation: compatible isogenies for a Ribet-Pyle A} implies that $
\psi=\nu_\sigma\circ\acc\sigma\psi\circ\nu_\sigma^{-1}$ for all $\psi\in
Z(\End_\kb^0(B))\simeq F$, so that  the map
$$\begin{array}{ccc}\End_\kb^0(B)&\lra &\End_\kb^0(B)\\
\psi &\longmapsto & \nu_\sigma\circ\acc\sigma\psi\circ\nu_\sigma^{-1}
\end{array}$$
is a $F$-algebra automorphism. By the Skolem-Noether Theorem it is inner,
and there exists an element $\alpha_\sigma\in \End_\kb^0(B)^*$ such that
$$\nu_\sigma\circ\acc\sigma\psi\circ\nu_\sigma^{-1}=\alpha_\sigma^{-1}\circ\psi\circ\alpha_\sigma,$$
for all $\psi\in \End_\kb^0(B)$. The isogeny $\alpha_\sigma\circ\nu_\sigma$
satisfies the compatibility condition \eqref{equation: diagram of
compatibility of isogenies} and we see that $B$ is a $k$-variety.

\end{proof}
The  following statement gives the other direction of the correspondence
between $k$-varieties and Ribet-Pyle varieties in the number field case.
\begin{theorem}\label{theorem: Ribet-Pyle variety associated to a k-variety}
Let $k$ be a number field, and let $B/\kb$ be a simple abelian $k$-variety.
Then there exists a Ribet-Pyle variety $A/k$ such that $A_\kb\sim B^n$ for
some $n$.
\end{theorem}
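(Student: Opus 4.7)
The plan is to construct $A$ as a Galois descent of a suitable power $B^n$, following the Ribet--Pyle strategy adapted from the $\Q$-curve setting to a general $k$-variety.

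First I would fix compatible isogenies $\mu_\sigma\colon\acc\sigma B\to B$ witnessing the $k$-variety structure of $B$, set $D=\End_\kb^0(B)$ (a division algebra since $B$ is simple), and let $F$ be its center. Exactly as in the proof of Proposition \ref{prop: Ribet-Pyle varieties are isotypical}, conjugation by $\mu_\sigma$ inverts the natural Galois action on $D$, so $G_k$ acts trivially on $F$. I would then introduce the $2$-cochain
$$c(\sigma,\tau)=\mu_\sigma\circ\acc\sigma\mu_\tau\circ\mu_{\sigma\tau}^{-1}\in D^*.$$
Compatibility of each $\mu_\sigma$ with every $\varphi\in D$ forces $c(\sigma,\tau)$ to commute with $D$, hence $c(\sigma,\tau)\in F^*$, and one verifies that $c$ is a $2$-cocycle. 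Thus $[c]\in H^2(G_k,F^*)$ with trivial Galois action; a standard finiteness argument (the Galois orbit of $B$ in the isogeny category has only finitely many objects) allows the $\mu_\sigma$'s to be chosen so that $c$ factors through $\Gal(K/k)$ for some finite Galois extension $K/k$.

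The key cohomological step is to produce a number field $L\supset F$ of degree $[L:F]=nt$, where $t$ is the Schur index of $D$ and $n$ is the integer that will appear in $A_\kb\sim B^n$, such that $L$ embeds as a maximal subfield of $M_n(D)$ and the inflation of $[c]$ to $H^2(G_k,L^*)$ vanishes. Equivalently, I would find a continuous $1$-cochain $\alpha\colon G_k\to L^*$, factoring through a finite quotient, with $c(\sigma,\tau)=\alpha(\sigma)\alpha(\tau)\alpha(\sigma\tau)^{-1}$. Existence of such an $L$ rests on the fact that a class in $H^2(\Gal(K/k),F^*)$ is killed after enlarging $F$ to an appropriate finite extension, with enough flexibility (for example by tensoring with auxiliary cyclotomic extensions) to match the required degree.

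Given $\alpha$, I would fix an embedding $L\hookrightarrow M_n(D)=\End_\kb^0(B^n)$ and define
$$\nu_\sigma=\alpha(\sigma)^{-1}\circ\mu_\sigma^{\oplus n}\colon\acc\sigma{B^n}\lra B^n.$$
The identity $c=d\alpha$ translates into $\nu_\sigma\circ\acc\sigma\nu_\tau=\nu_{\sigma\tau}$, so by Weil's descent theorem in the isogeny category the datum $\{\nu_\sigma\}$ defines an abelian variety $A/k$ with $A_\kb\sim B^n$. By construction $L\subset M_n(D)$ commutes with every $\nu_\sigma$, hence sits inside $\End_k^0(A)$; since $L$ is a maximal commutative subfield of $M_n(D)$, equality holds. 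Finally $A_\kb\sim B^n$ is a $k$-variety because $B$ is one, so $A$ is a Ribet--Pyle variety as required.

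The main obstacle is the cohomological splitting step: producing a number field $L$ that simultaneously kills the class $[c]\in H^2(G_k,F^*)$ and has the precise degree $nt$ needed to embed as a maximal subfield of $M_n(D)$. The first is standard in the number-field setting, but combining it with the degree constraint is where the bulk of the work lies; everything else is mechanical verification of the twisted cocycle identity and of the identification $\End_k^0(A)=L$ via Galois descent.
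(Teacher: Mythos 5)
Your overall architecture --- form the cocycle $c(\sigma,\tau)=\mu_\sigma\circ\acc\sigma\mu_\tau\circ\mu_{\sigma\tau}^{-1}\in F^*$, split it by a continuous map $\alpha\colon G_k\to L^*$ into a suitable number field $L\supseteq F$, twist the diagonal isogenies by $\alpha(\sigma)^{-1}$ and descend $B^n$ --- is the paper's, and your variant of embedding $L$ with $[L:F]=nt$ directly as a self-centralizing maximal subfield of $\M_n(D)$ (so that $\End_k^0(A)=C(L)=L$ in one step) is a legitimate streamlining of the paper's detour through a variety $X_\beta$ with $\End_k^0(X_\beta)\simeq L\otimes_F D\simeq \M_t(L)$ and $X_\beta\sim_k A^t$. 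The descent computation and the identification of $\End_k^0(A)$ with the centralizer of $L$ are correct as sketched.

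The gap is exactly the step you defer. Two points. First, the justification you offer for it is wrong: for a finite group $G=\Gal(K/k)$ acting trivially, a class in $H^2(G,F^*)$ is in general \emph{not} killed by enlarging the coefficients to a finite extension of $F$, nor even to $\Fb^*$ (for instance $H^2((\Z/2)^2,\C^*)\neq 1$). What actually splits the class is Tate's theorem that $H^2(G_k,\Fb^*)=1$ for the full profinite group $G_k$ with trivial action; continuity of the resulting splitting map then forces its values to generate a finite extension of $F$. Second, the essential constraint on $L$ is not its degree (which you may simply read off a posteriori, setting $n=[L:F]/t$) but that $L$ must \emph{split the division algebra} $D$: otherwise $L$ does not embed in $\M_n(D)$ as a strictly maximal subfield and its centralizer is strictly larger than $L$. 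Tate's theorem gives no control over which field the splitting map generates, so one must show it can be modified to land in a splitting field of $D$. The paper does this in two steps: (i) Grunwald--Wang plus Kronecker--Weber show that $D$ is split by some $F(\zeta_m)$; (ii) twisting a given splitting map by a character $\chi\colon G_k\to\Fb^*$ of suitable order produces a new splitting map $\beta$ with $F(\zeta_m)\subseteq E_\beta$. Without an argument of this kind the construction does not get off the ground, so this is a genuine missing ingredient rather than routine verification.
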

Before giving the proof of Theorem \ref{theorem: Ribet-Pyle variety
associated to a k-variety} we shall need some preliminary results.

\subsection*{Cohomology classes and splitting fields} Let $k$ be a number field and let $B/\kb$ be a simple
abelian $k$-variety. Let $\cB$ be its endomorphism algebra and let
$F$ be the center of $\cB$. Since $B$ has a model over a finite
extension of $k$, we can choose for each $\sigma\in G_k$ a
compatible isogeny $\mu_\sigma: \acc\sigma B\ra B$ in such a way
that the set $\{\mu_\sigma\}_{\sigma\in G_k}$ is locally constant;
more precisely, such that $\mu_\sigma=\mu_\tau$ if $\acc\sigma
B=\acc \tau B$. Then we can define a map $c_B:G_k\times G_k\ra
F^*$ by means of
$c_B(\sigma,\tau)=\mu_\sigma\circ\acc\sigma\mu_\tau\circ\mu_{\sigma\tau}^{-1}$.
It is easy to check that $c_B$ is a continuous 2-cocycle of $G_k$
with values in $F^*$ (considering the trivial action of $G_k$ in
$F^*$). Its cohomology class $[c_B]\in H^2(G_k,F^*)$ is an
invariant of the isogeny class of $B$ and it is independent of the
compatible isogenies used to define it.

The inclusion of $G_k$-modules with trivial action $F^*\hookrightarrow
\Fb^*$ induces a homomorphism between the cohomology groups $H^2(G_k,F^*)\ra
H^2(G_k,\Fb^*)$. A theorem of Tate implies that $H^2(G_k,\Fb^*)=\{1\}$
(see~\cite[Theorem 6.3 ]{ribet-avQ}). Therefore, the image of $[c_B]$ in
$H^2(G_k,\Fb^*)$ is trivial, which means that there exist continuous maps
$\beta:G_k\ra \Fb^*$ such that
\begin{equation}\label{equation: splitting map def}c_B(\sigma,\tau)=\beta(\sigma)\beta(\tau)\beta(\sigma\tau)^{-1}.\end{equation}
We say that a map $\beta$ satisfying \eqref{equation: splitting map def} is
a \emph{splitting map} for the cocycle $c_B$. If $\chi\colon G_k\ra \Fb^*$
is a character then $\beta'=\beta\chi$ is another splitting map for $c_B$.
In fact, as we vary $\chi$ through all the characters from $G_k$ to $\Fb^*$
we obtain all the splitting maps for $c_B$.  For a splitting map $\beta$, we
will denote by $E_\beta$ the field $F(\{\beta(\sigma)\}_{\sigma\in
G_k})\subseteq \Fb$. The extension $E_\beta/F$ is finite because  $\beta $
is continuous.

 Let $m$ be the order of $[c_B]$ in $H^2(G_k,F^*)$, and let
$d$ be a continuous map $d\colon G_k\ra F^*$ expressing $c_B^m$ as a
coboundary:
\begin{equation}\label{equation: m-power of c_B as a coboundary}
c_B(\sigma,\tau)^m=d(\sigma)d(\tau)d(\sigma\tau)^{-1}.
\end{equation}We define a map
\begin{equation*}
\begin{array}{cccc}
\varepsilon_\beta\colon  &G_k&\lra & \Fb^*\\
 & \sigma & \longmapsto &\beta(\sigma)^m/d(\sigma).
\end{array}
\end{equation*}
By \eqref{equation: splitting map def} and \eqref{equation: m-power of c_B as a coboundary} we see that $\varepsilon_\beta\colon G_k\ra \Fb^*$
is a continuous character.
\begin{lemma}\label{lemma: existence of splitting fields containing cyclotomic extensions}
For each nonnegative integer $n$ there exists a splitting map
$\beta$ such that $F(\zeta_n)\subseteq E_\beta$, where $\zeta_n$ is
a primitive $n$-th root of unity in $\Fb$.
\end{lemma}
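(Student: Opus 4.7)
The plan is to start from an arbitrary splitting map $\beta_0$ (whose existence has just been established via Tate's theorem) and to produce the desired $\beta$ in the form $\beta = \beta_0 \chi$ for a continuous character $\chi \colon G_k \to \Fb^*$ to be chosen. Since $\beta_0$ is continuous with values in the discrete group $\Fb^*$, it has finite image and hence factors, as a set map, through $\Gal(K_0/k)$ for some finite Galois extension $K_0/k$; write $d = [K_0 : k]$ and $H_0 = \Gal(\kb/K_0)$. Using global class field theory---specifically, the fact that $k$ admits a cyclic extension of every finite degree---I would pick a continuous surjective character $\chi \colon G_k \twoheadrightarrow \mu_{nd} \subseteq \Fb^*$. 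That $\beta = \beta_0\chi$ is again a splitting map for $c_B$ is immediate from the multiplicativity of $\chi$.

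The key identity is that for $\tau \in H_0$ and any $\sigma \in G_k$, the equality $\beta_0(\tau\sigma) = \beta_0(\sigma)$ (since $\tau$ acts trivially on $K_0$) combined with $\chi$ being a homomorphism yields
$$
\chi(\tau) \;=\; \frac{\chi(\tau\sigma)}{\chi(\sigma)} \;=\; \frac{\beta(\tau\sigma)}{\beta(\sigma)} \;\in\; E_\beta.
$$
Thus $\chi(H_0) \subseteq E_\beta$. The proof is then finished by a short index computation: the map $\chi$ induces a surjection $G_k/H_0 \twoheadrightarrow \mu_{nd}/\chi(H_0)$, so $[\mu_{nd} : \chi(H_0)]$ divides $[G_k:H_0] = d$, and therefore $|\chi(H_0)|$ is a multiple of $nd/d = n$. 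As $\chi(H_0)$ is a subgroup of the cyclic group $\mu_{nd}$ of order divisible by $n$, it contains the unique subgroup $\mu_n$, whence $\zeta_n \in E_\beta$ and $F(\zeta_n) \subseteq E_\beta$.

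The only technical obstacle is producing the surjective continuous character $\chi \colon G_k \twoheadrightarrow \mu_{nd}$, but this rests on the standard fact from global class field theory that every finite cyclic group is realized as a Galois group over any number field (for instance by combining sub-extensions of the $\Z_p$-extensions for the primes dividing $nd$); everything else in the argument is elementary group theory applied to the cyclic group $\mu_{nd}$.
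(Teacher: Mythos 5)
Your proof is correct, but it follows a genuinely different route from the paper's. The paper works with the auxiliary character $\varepsilon_{\beta}(\sigma)=\beta(\sigma)^m/d(\sigma)$ (where $m$ is the order of $[c_B]$ and $d$ trivializes $c_B^m$ over $F^*$): since $d$ takes values in $F^*$, the image of $\varepsilon_\beta$ lies in $E_\beta$, and the proof consists in twisting $\beta$ by a suitable character so that $\varepsilon_\beta$ acquires order divisible by $n$, whence $\mu_n\subseteq\im\varepsilon_\beta\subseteq E_\beta$. You avoid $m$ and $d$ altogether: you pass to the open normal subgroup $H_0$ on which the initial splitting map $\beta_0$ is invariant under translation, observe that on $H_0$ the twisted $\beta=\beta_0\chi$ computes $\chi$ up to ratios of values of $\beta$, so $\chi(H_0)\subseteq E_\beta$, and then force $\mu_n\subseteq\chi(H_0)$ by taking $\chi$ surjective onto $\mu_{nd}$ with $d=[G_k:H_0]$. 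Both arguments rest on the same external input (existence of continuous characters of $G_k$ of arbitrary prescribed order, which the paper also uses implicitly when it picks a character of order $mn/e$). Your index count $[\mu_{nd}:\chi(H_0)]\mid d$ is clean and completely determines a lower bound on $|\chi(H_0)|$; by contrast, the paper's computation of the order of the product $\chi^m\varepsilon_{\beta'}$ as $nr/e$ requires some care (the order of a product of two characters is not in general determined by the two orders alone), so your argument is, if anything, the more robust of the two. One could tighten the exposition slightly by noting explicitly that $H_0$ can be taken open and \emph{normal} (so that the induced map $G_k/H_0\to\mu_{nd}/\chi(H_0)$ makes sense and invariance under left translation by $H_0$ is what you need), but this is standard for locally constant maps on profinite groups.
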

\begin{proof}
Let $\beta'$ be a splitting map for $c_B$, and let $r$ be the
order of $\varepsilon_{\beta'}$. Let $e=\gcd(n,r)$ and let
$\chi\colon G_k\ra \Fb^*$ be a character of order $mn/e$,
where $m$ is the order of $[c_B]$ in $H^2(G_k,F^*)$. Then the
character $\chi^m\varepsilon_{\beta'}$ is the character that
corresponds to the splitting map $\beta=\chi\beta'$ and its order
is $nr/e$, which is a multiple of $n$. Therefore $E_\beta$
contains a primitive $n$-th root of unity $\zeta_n$.
\end{proof}

\subsection*{  Cyclic splitting fields of simple algebras} Let $\cA$
be a central simple algebra over a number field $F$. A well-known result of
central simple algebras over number fields  guarantees the existence of
fields $L$ cyclic over $F$ that split $\cA$ (i.e. with $\cA\otimes_FL\simeq
\M_n(L)$ for some $n$). In order to prove Theorem~\ref{theorem: Ribet-Pyle
variety associated to a k-variety} we  use a similar result, but with the
extension $L$ being cyclic over $\Q$ and such that  $LF$ splits $\cA$.
Although this is probably also well-known, for lack of reference we  include
a proof based on   the Grunwald-Wang Theorem.

\begin{theorem}[Grunwald-Wang Theorem]
Let $M$ be a number field, and let $\{(v_1,n_1),\ldots,(v_r,n_r)\}$ be a
finite set of pairs, where each $v_i$ is a place of $M$ and each $n_i$ is a
positive integer such that $n_i\leq 2$ if $v_i$ is a real place, and $n_i=1$
if $v_i$ is a complex place. Let $m$ be the least common multiple of the
$n_i$'s, and let $n$ be a positive integer divisible by $m$. Then there
exists a cyclic extension $L/M$ of degree $n$ such that for each $i$ the
degree $[L_{v_i}:M_{v_i}]$ is divisible by $n_i$.
\end{theorem}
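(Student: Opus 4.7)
The plan is to translate the statement into class field theory and construct the desired extension $L$ as the fixed field of a carefully chosen continuous character of the id\`ele class group. By global class field theory, cyclic extensions $L/M$ of degree dividing $n$ correspond bijectively to continuous characters $\chi\colon C_M\to \tfrac{1}{n}\Z/\Z$ of the id\`ele class group $C_M=\mathbb{A}_M^*/M^*$; under this bijection the local degree $[L_v:M_v]$ equals the order of the local component $\chi_v\colon M_v^*\to \tfrac{1}{n}\Z/\Z$. Hence the theorem reduces to producing a $\chi$ of global order exactly $n$ such that $\chi_{v_i}$ has order divisible by $n_i$ for each $i$.

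First I would do the local step. At each $v_i$, the explicit structure of $M_{v_i}^*$ (the decomposition $\pi_{v_i}^{\Z}\times\mu(M_{v_i})\times U_{v_i}^{(1)}$ at finite places, the sign character at real places, and only the trivial character at complex places) lets me exhibit a continuous character $\chi_i\colon M_{v_i}^*\to \tfrac{1}{n}\Z/\Z$ of exact order $n_i$. The restrictions $n_i\leq 2$ at real places and $n_i=1$ at complex places appear in the hypothesis precisely because these are the only orders realizable there.

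Next I would glue the $\chi_i$ into a global character. Dually this amounts to surjectivity of the restriction map $\Hom_{\cont}(C_M,\tfrac{1}{n}\Z/\Z)\to \prod_i \Hom_{\cont}(M_{v_i}^*,\tfrac{1}{n}\Z/\Z)$ onto the prescribed family, which follows from weak approximation together with Chebotarev's density theorem applied to the cyclotomic and Kummer extensions of $M$. To ensure the global order of $\chi$ is \emph{exactly} $n$ rather than merely dividing $n$, I would adjoin an auxiliary finite place $w\notin\{v_i\}$, chosen via Chebotarev so that $\mu(M_w)$ contains $\zeta_n$, and prescribe a local character $\chi_w$ of order $n$ there.

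The main obstacle will be the classical Grunwald--Wang \emph{special case} at the prime $2$: when $M^*\cap(\mathbb{A}_M^*)^n$ is strictly larger than $M^{*n}$, a phenomenon that genuinely occurs over $\Q$ for $n=8$, the naive gluing can fail to yield a character trivial on $M^*$. The hypothesis that $n$ may be chosen as \emph{any} multiple of $\operatorname{lcm}(n_i)$, not only $\operatorname{lcm}(n_i)$ itself, is exactly what gives the flexibility to bypass this defect, for instance by replacing $n$ by $2n$ when necessary.
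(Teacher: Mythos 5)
The paper does not prove this statement at all: the Grunwald--Wang Theorem is quoted there as a classical black box (it is then \emph{applied} in the following proposition), so there is no internal proof to compare yours against. Judged on its own, your sketch has the right skeleton --- the standard class-field-theoretic translation into characters of $C_M$, with local degrees read off as orders of local components --- but the two steps where all the content of the theorem lives are not actually carried out, and one of them is handled incorrectly.

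First, the gluing step. The restriction map
$\Hom_{\cont}(C_M,\tfrac1n\Z/\Z)\to\prod_i\Hom_{\cont}(M_{v_i}^*,\tfrac1n\Z/\Z)$
is \emph{not} surjective in general; its failure of surjectivity is dual to the failure of the Hasse principle for $n$-th powers, i.e.\ to $M^*\cap\prod_v(M_v^*)^n$ exceeding $(M^*)^n$, which is exactly the special case you mention later. Weak approximation only produces a character of the id\`eles with prescribed local components; the entire difficulty is making it trivial on $M^*$, and ``weak approximation plus Chebotarev'' does not do that. What saves the theorem in the form stated here is that you are not required to hit a prescribed family of local characters on the nose, only to realize local components whose orders are \emph{divisible by} $n_i$; with that freedom (one may replace an unramified local character by a ramified one of the same order) the obstruction can always be avoided, but this is precisely the argument you would need to write down, and it is missing.

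Second, your proposed escape from the special case --- ``replacing $n$ by $2n$ when necessary'' --- misreads the statement. Here $n$ is an arbitrary prescribed multiple of $m=\operatorname{lcm}(n_i)$ and the conclusion demands a cyclic extension of degree \emph{exactly} $n$; producing one of degree $2n$ proves a different (weaker) assertion, and in particular would not suffice for the paper's application, where the degree $2tn$ of $L/\Q$ is used. So as written the proposal is a plausible plan with the decisive step asserted rather than proved, and with an invalid fallback for the one genuine obstruction it identifies.
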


\begin{proposition}
Let $F$ be a number field and let $\cD$ be a central division algebra over
$F$. There exists a cyclic extension $L/\Q$ such that $LF$ is a splitting
field for  $\cD$.
\end{proposition}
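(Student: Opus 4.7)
The plan is to reduce the statement to a Brauer-theoretic condition on local degrees and then produce $L$ via Grunwald-Wang over $\Q$. By the local-global principle for central simple algebras over number fields, the class of $\cD$ in $\Br(F)$ is determined by its local invariants $\operatorname{inv}_v(\cD)\in\Q/\Z$, which vanish for all but finitely many places $v$ of $F$; moreover an extension $M/F$ splits $\cD$ if and only if, for every place $u$ of $M$ above a ramified place $v$ of $\cD$, the local degree $[M_u:F_v]$ is a multiple of the local index $n_v$ (the order of $\operatorname{inv}_v(\cD)$).

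Let $v_1,\ldots,v_r$ be the ramified places of $\cD$, write $w_i=v_i|_\Q$, and set $d_{v_i}=n_{v_i}\cdot [F_{v_i}:\Q_{w_i}]$. For each place $w$ of $\Q$ lying below some $v_i$, let $N_w=\operatorname{lcm}\{d_v:v\mid w\}$. The central elementary fact I would use is that for any extension $L/\Q$, any place $w'$ of $L$ above $w$, and any place $u$ of $LF$ above the corresponding ramified place $v$ of $F$, the completion satisfies $(LF)_u=L_{w'}F_v$ and hence
$$[(LF)_u:F_v]=\frac{[L_{w'}:\Q_w]}{[L_{w'}\cap F_v:\Q_w]}.$$
Since $[L_{w'}\cap F_v:\Q_w]$ divides $[F_v:\Q_w]$, the condition $d_v\mid [L_{w'}:\Q_w]$ automatically forces $n_v\mid [(LF)_u:F_v]$, which is precisely the splitting requirement at $v$.

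I would then apply the Grunwald-Wang Theorem to $M=\Q$ with the finite collection of pairs $(w,N_w)$ and any $n$ divisible by $\operatorname{lcm}_w N_w$, obtaining a cyclic extension $L/\Q$ of degree $n$ with $N_w\mid [L_{w'}:\Q_w]$ at each relevant place $w$. By the previous paragraph, $LF$ splits $\cD$. The only hypothesis of Grunwald-Wang that requires verification is the archimedean constraint $N_\infty\leq 2$: since $\cD$ is a division algebra, its local index at a real place of $F$ is $1$ or $2$ and at a complex place is $1$, and in both cases $d_v\leq 2$ (real: $n_v\leq 2$, $[F_v:\Q_\infty]=1$; complex: $n_v=1$, $[F_v:\Q_\infty]=2$), so $N_\infty\leq 2$ as needed.

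I do not expect any serious obstacle; the argument is a clean packaging of the Grunwald-Wang Theorem together with the elementary local-degree identity for compositums. The only subtle point is keeping track of the local degrees of $L$, $F$, and $LF$ simultaneously, which the identity above handles in one line, and the automatic verification of the archimedean Grunwald-Wang constraint via the fact that division algebras have local index at most $2$ at real places.
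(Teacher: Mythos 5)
Your proof is correct and follows essentially the same route as the paper's: apply Grunwald--Wang over $\Q$ at the rational primes below the ramified places (plus infinity), then use the compositum local-degree identity $[L_{w'}F_v:F_v]=[L_{w'}:L_{w'}\cap F_v]$ to force the required divisibility. The only (harmless) difference is in the bookkeeping: you prescribe local degrees tailored to the local indices $n_v$ and the actual degrees $[F_v:\Q_w]$, whereas the paper uniformly imposes degree $t\,[F':\Q]$ (Schur index times the degree of the Galois closure of $F$) at the finite primes and degree $2$ at infinity, then invokes the splitting criterion for totally imaginary extensions.
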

\begin{proof}
Let $F'$ be the Galois closure of $F$. Let $n=[F':\Q]$ and let $t$
be the Schur index of $\cD$. Let $\{\gp_1,\ldots,\gp_s\}$ be the
set of primes of $F$ where $\cD$ ramifies, and let
$\{p_1,\ldots,p_l\}$ be the set of primes of $\Q$ below
$\{\gp_1,\ldots,\gp_s\}$.  The Grunwald-Wang Theorem, when applied
to the primes $p_i$ with $n_i=tn$, and to the infinite place of
$\Q$ with $n_\infty=2$, guarantees the existence of a cyclic
extension $L/\Q$ of degree $2tn$ such that $[L_p:\Q_p]=tn$ for all
$p$ belonging to $\{p_1,\ldots,p_l\}$ and $L_v=\C$ for all
archimedean places $v$ of $L$. Let $K=LF$.

If $\gp$ is a prime of $F$  dividing $p$, and $\gP$ is  a prime of $K$
dividing $\gp$, the fields $L_p$ and $F_\gp$ can be seen as subfields of
$K_\gP$. Then the degree $g=[L_p\cap F_\gp:\Q_p]$ divides $n$, so
$[L_p\colon L_p\cap F_\gp]=t\frac{n}{g}=[F_\gp L_p\colon F_\gp]$ and we see
that $t$ divides $[K_\gP:F_\gp]$. Therefore, $K$ is a totally imaginary
extension of $F$ such that, for every prime $\gp$ of $F$ ramifying in $\cD$
and for every prime $\gP$ of $K$ dividing $\gp$, the index $[K_\gP:F_\gp]$
is a multiple of the Schur index of $\cD$. This implies that $K$ is  a
splitting field for $\cD$ (see \cite[Corollary 18.4 b and Corollary 17.10
a]{Pi}).
\end{proof}
\begin{corollary}\label{corollary: existence of splitting fields containing roots of unity}
Every central division $F$-algebra is split by an extension of the form
$F(\zeta_m)$ for some $m$.
\end{corollary}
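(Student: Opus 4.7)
The plan is to derive the corollary as a direct consequence of the preceding proposition combined with the Kronecker--Weber theorem. The proposition furnishes a cyclic extension $L/\Q$ such that $LF$ is a splitting field for $\cD$. Since any abelian, and in particular cyclic, extension of $\Q$ is contained in a cyclotomic field, I would invoke Kronecker--Weber to embed $L\subseteq \Q(\zeta_m)$ for some positive integer $m$. Consequently $LF\subseteq F(\zeta_m)$.

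It then remains to observe the standard fact that splitting is preserved under extension of scalars: if $\cD\otimes_F LF\simeq \M_n(LF)$, then tensoring with $F(\zeta_m)$ over $LF$ gives $\cD\otimes_F F(\zeta_m)\simeq \M_n(F(\zeta_m))$, so $F(\zeta_m)$ splits $\cD$ as well. This yields the desired cyclotomic splitting field.

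There is no real obstacle here; the work has already been done in the proposition, and the only new ingredient is Kronecker--Weber to replace an abstract cyclic extension of $\Q$ by an explicit cyclotomic one. If one wanted to be slightly more self-contained, one could remark at the outset that the proposition combined with Kronecker--Weber actually produces a cyclic subextension $L$ of some $\Q(\zeta_m)$ satisfying all the local conditions imposed in its proof (sufficiently large local degrees at the ramified primes of $\cD$ and total imaginarity), but this refinement is not needed: once $LF$ splits $\cD$, every larger field, in particular $F(\zeta_m)$, does too.
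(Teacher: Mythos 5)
Your proposal is correct and follows exactly the paper's argument: apply the preceding proposition to get a cyclic $L/\Q$ with $LF$ splitting $\cD$, embed $L$ in $\Q(\zeta_m)$ via Kronecker--Weber, and use that splitting persists under further extension of scalars. Nothing is missing.
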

\begin{proof}
By the previous proposition there exists a cyclic extension $L/\Q$ such that
$LF$ splits $\cD$. The field $L$ is contained in a field of the form
$\Q(\zeta_m)$ by the Kronecker-Weber Theorem, and then $F(\zeta_m)$ splits
$\cD$.
\end{proof}
\subsection*{Construction of Ribet-Pyle varieties} In this paragraph we
perform the construction of Ribet-Pyle varieties having a $k$-variety $B$ as
simple factor.  Recall that $\cB$ denotes $\End_\kb^0(B)$,
 $F$ is the center of $\cB$ and $t$ denotes the Schur index of $\cB$.
 Fix also   a  locally constant set of
 isogenies $\{\mu_\sigma\colon  \acc\sigma B\ra B\}_{\sigma\in G_k}$, let $c_B$ be the cocycle constructed with these isogenies and let
 $\beta$ be a splitting map for $c_B$.

Let $n$ be the degree $[E_\beta:F]$, and fix an injective $F$-algebra
homomorphism
\begin{equation*}
\phi\colon E_\beta\lra \M_n(F)\subseteq \M_n(\cB)\simeq \End_\kb^0(B^n).
\end{equation*}
The elements of $E_\beta$ act as endomorphisms of $B^n$ up to isogeny by
means of $\phi$. Let $\hat\mu_\sigma$ be the diagonal isogeny
$\hat\mu_\sigma\colon \acc\sigma B^n\ra B^n$ consisting in $\mu_\sigma$ in
each factor.
\begin{proposition}\label{prop: definition of X_beta}
There exists an abelian variety $X_\beta$ over $k$ and a $\kb$-isogeny
$\kappa\colon B^n\ra X_\beta$  such that
$\kappa^{-1}\circ\acc\sigma\kappa=\phi(\beta(\sigma))^{-1}\circ \hat
\mu_\sigma$ for all $\sigma\in G_k$. Moreover, the $k$-isogeny class of
$X_\beta$ is independent of the chosen injection $\phi$.
\end{proposition}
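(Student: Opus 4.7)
The plan is to realize $X_\beta$ by Galois descent. Set $\lambda_\sigma:=\phi(\beta(\sigma))^{-1}\circ\hat\mu_\sigma\colon \acc\sigma B^n\to B^n$. The first step is to verify that $\{\lambda_\sigma\}_{\sigma\in G_k}$ is a $1$-cocycle of isogenies, i.e.\ $\lambda_{\sigma\tau}=\lambda_\sigma\circ\acc\sigma\lambda_\tau$. This rests on three ingredients: (i) since $\hat\mu_\sigma$ is diagonal and each $\mu_\sigma$ is compatible with $\End_\kb(B)$, the identity $\acc\sigma\psi=\hat\mu_\sigma^{-1}\psi\hat\mu_\sigma$ holds for every $\psi\in \M_n(\cB)$; (ii) the diagonal cocycle $\hat\mu_\sigma\circ\acc\sigma\hat\mu_\tau\circ\hat\mu_{\sigma\tau}^{-1}$ equals $\phi(c_B(\sigma,\tau))$, because $c_B(\sigma,\tau)\in F\subseteq E_\beta$ and $\phi$ is an $F$-algebra homomorphism, so $F$-scalars go to scalar matrices; (iii) the splitting identity $c_B(\sigma,\tau)=\beta(\sigma)\beta(\tau)\beta(\sigma\tau)^{-1}$. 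Since $\phi(E_\beta)$ is commutative, combining (i)--(iii) collapses the product to $\lambda_\sigma\circ\acc\sigma\lambda_\tau=\phi(\beta(\sigma\tau))^{-1}\circ\hat\mu_{\sigma\tau}=\lambda_{\sigma\tau}$.

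Next I would descend. Because the $\mu_\sigma$ were chosen locally constant and $\beta$ is continuous, the map $\sigma\mapsto\lambda_\sigma$ factors through a finite Galois quotient of $G_k$. I then invoke the standard descent criterion for abelian varieties in the isogeny category (the same tool used throughout \cite{ribet-avQ} and \cite{pyle}): such a cocycle produces an abelian variety $X_\beta$ defined over $k$ together with a $\kb$-isogeny $\kappa\colon B^n\to X_\beta$ satisfying $\acc\sigma\kappa=\kappa\circ\lambda_\sigma$, equivalently $\kappa^{-1}\circ\acc\sigma\kappa=\lambda_\sigma$. The main obstacle here is purely formal: since each $\lambda_\sigma$ is only an isogeny, one must work in the category of abelian varieties up to isogeny so that $\lambda_\sigma$ becomes an isomorphism, and then apply Weil's descent theorem.

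For independence from $\phi$, suppose $\phi_1,\phi_2\colon E_\beta\hookrightarrow \M_n(F)$ are two injective $F$-algebra embeddings, producing varieties $X_{\beta,1},X_{\beta,2}$ and $\kb$-isogenies $\kappa_i\colon B^n\to X_{\beta,i}$. By the Skolem--Noether Theorem applied to the central simple $F$-algebra $\M_n(F)$, there exists $u\in\GL_n(F)$ with $\phi_2(x)=u\phi_1(x)u^{-1}$ for all $x\in E_\beta$. I would then show that $\rho:=\kappa_2\circ u\circ\kappa_1^{-1}\colon X_{\beta,1}\to X_{\beta,2}$ descends to $k$ by a direct Galois-invariance check: using $\acc\sigma\kappa_i=\kappa_i\circ\phi_i(\beta(\sigma))^{-1}\circ\hat\mu_\sigma$ together with $\acc\sigma u=\hat\mu_\sigma^{-1}\circ u\circ\hat\mu_\sigma$ from (i), the expression $\acc\sigma\rho$ simplifies to $\kappa_2\circ\phi_2(\beta(\sigma))^{-1}\circ u\circ\phi_1(\beta(\sigma))\circ\kappa_1^{-1}$, which equals $\rho$ exactly by the Skolem--Noether relation $u\phi_1(\beta(\sigma))u^{-1}=\phi_2(\beta(\sigma))$. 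Hence $\rho$ provides the required $k$-isogeny between $X_{\beta,1}$ and $X_{\beta,2}$, completing both parts of the proposition.
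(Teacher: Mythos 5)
Your argument is correct and follows essentially the same route as the paper: you verify the $1$-cocycle condition for $\lambda_\sigma=\phi(\beta(\sigma))^{-1}\circ\hat\mu_\sigma$ using the compatibility of $\hat\mu_\sigma$, the definition of $c_B$, and the splitting identity, then invoke the descent criterion in the isogeny category (the paper cites \cite[Theorem 8.1]{ribet-avQ} for exactly this), and finally prove independence of $\phi$ via Skolem--Noether and the same Galois-invariance computation for $\kappa_2\circ u\circ\kappa_1^{-1}$. No gaps.
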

\begin{proof}
Let $\nu_\sigma$ be the  isogeny defined as
$\nu_\sigma=\phi(\beta(\sigma))^{-1}\circ\hat\mu_\sigma$. In order to prove
the existence of $X_\beta$, by \cite[Theorem 8.1]{ribet-avQ} we need to
check that $\nu_\sigma\circ\acc\sigma\nu_\tau\circ\nu_{\sigma\tau}^{-1}=1$.
By the compatibility of $\mu_\sigma$ we have that:
\begin{eqnarray*}
\nu_\sigma\circ{^\sigma
\nu_\tau}\circ\nu_{\sigma\tau}^{-1}&=&\phi(\beta(\sigma))^{-1}\circ\hat{\mu}_\sigma\circ
{^\sigma \phi(\beta(\tau))^{-1}}\circ{^\sigma \hat{\mu}_\tau}\circ
\hat{\mu}_{\sigma\tau}^{-1}\circ \phi(\beta(\sigma\tau))\\
&=&\phi(\beta(\sigma))^{-1}\circ\phi(\beta(\tau))^{-1}\circ\hat{\mu}_\sigma\circ
{^\sigma \hat{\mu}_\tau}\circ \hat{\mu}_{\sigma\tau}^{-1} \circ\phi(\beta(\sigma\tau))\\
&=&\phi(\beta(\sigma))^{-1}\circ\phi(\beta(\tau))^{-1}\circ c_B(\sigma,\tau) \circ\phi(\beta(\sigma\tau))\\
&=&\phi(\beta(\sigma)^{-1}\circ\beta(\tau)^{-1} \circ\beta(\sigma\tau))\circ c_B(\sigma,\tau)\\
&=&\phi(c_B(\sigma,\tau)^{-1})\circ c_B(\sigma,\tau)=c_B(\sigma,\tau)^{-1}\circ c_B(\sigma,\tau)=1.\\
\end{eqnarray*}
Now suppose that $\phi$ and $\psi$ are $F$-algebra homomorphisms $E_\beta\ra
\M_n(F)$, and let $X_{\beta,\phi}$ and $X_{\beta,\psi}$ denote the varieties
constructed by the above procedure using $\phi$ and $\psi$ respectively to
define the action of $E_\beta$ on $B^n$. We aim to see that $X_{\beta,\phi}$
and $X_{\beta,\psi}$ are $k$-isogenous.

Let $C$ denote the image of $\phi$. The map $\phi(x)\mapsto\psi(x)\colon
C\ra \M_n(F)$ is a $F$-algebra homomorphism. Since $C$ is simple and
$\M_n(F)$ is central simple over $F$, by the Skolem-Noether Theorem there
exists an element $b$ in $\M_n(F)$ such that $\phi(x)=b\psi(x)b^{-1}$ for
all $x$ in $E_\beta$. By the defining property of $X_{\beta,\phi}$ and
$X_{\beta,\psi}$ there exist $\kb$-isogenies $\kappa\colon B^n\lra
X_{\beta,\phi}$ and $\lambda\colon B^n\lra X_{\beta,\psi}$ such that
\begin{equation}\label{eq:11} \kappa^{-1}\circ
{^\sigma\kappa}=\phi(\beta(\sigma))^{-1}\circ
\hat{\mu}_\sigma=b\circ\psi(\beta(\sigma))^{-1}\circ
b^{-1}\circ\hat{\mu}_\sigma,\end{equation}
\begin{equation}\label{eq:12}
 \lambda^{-1}\circ
{^\sigma\lambda}=\psi(\beta(\sigma))^{-1}\circ \hat{\mu}_\sigma.
\end{equation}
The $\kb$-isogeny $\nu=\kappa\circ b\circ \lambda^{-1}\colon
X_{\beta,\psi}\lra X_{\beta,\phi}$ is in fact defined over $k$, since for
each $\sigma$ of $G_k$ we have that
\begin{eqnarray*}
\nu^{-1}\circ{^\sigma\nu}&=&\lambda\circ b^{-1}\circ\kappa^{-1}\circ{^\sigma
\kappa}\circ{^\sigma
b}\circ{^\sigma \lambda^{-1}}\\
&=&\lambda\circ b^{-1}\circ b\circ \psi(\beta(\sigma))^{-1}\circ
b^{-1}\circ\hat{\mu}_\sigma \circ{^\sigma b}\circ{^\sigma
\lambda^{-1}}\\
&=&\lambda\circ\psi(\beta(\sigma))^{-1}\circ\hat{\mu}_\sigma \circ{^\sigma
b^{-1}}\circ{^\sigma
b}\circ{^\sigma\lambda^{-1}}\\
&=&\lambda\circ \lambda^{-1}\circ{^\sigma \lambda}\circ{^\sigma \lambda
^{-1}}=1,
\end{eqnarray*}
where we used the compatibility of $\hat\mu_\sigma$ with the endomorphisms
of $B^n$ in the third equality, and the expressions \eqref{eq:11} and
\eqref{eq:12} in the second and fourth equality respectively.
\end{proof}
\begin{proposition}\label{prop: k-rational endomorphisms of X_beta}
The algebra $\End_k^0(X_\beta)$ is isomorphic to the centralizer of
$E_\beta$ in $\M_n(\cB)$.
\end{proposition}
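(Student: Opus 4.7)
The plan is to transport the $k$-rational endomorphism algebra back to $B^n$ via the fixed $\kb$-isogeny $\kappa$, and then read off the Galois-invariance condition. Concretely, the isogeny $\kappa\colon B^n\to X_\beta$ induces a $\Q$-algebra isomorphism
\[
\End_\kb^0(X_\beta)\;\xrightarrow{\sim}\;\End_\kb^0(B^n)\;=\;\M_n(\cB),
\qquad\varphi\longmapsto \Phi:=\kappa^{-1}\circ\varphi\circ\kappa,
\]
and since $X_\beta$ is defined over $k$, an element $\varphi$ belongs to $\End_k^0(X_\beta)$ if and only if $\acc\sigma\varphi=\varphi$ for every $\sigma\in G_k$. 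So it suffices to describe, via the isomorphism above, which $\Phi\in\M_n(\cB)$ correspond to Galois-invariant $\varphi$.

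For this I would use the defining relation $\kappa^{-1}\circ\acc\sigma\kappa=\phi(\beta(\sigma))^{-1}\circ\hat\mu_\sigma$ given in Proposition~\ref{prop: definition of X_beta}, rewritten as $\acc\sigma\kappa=\kappa\circ\phi(\beta(\sigma))^{-1}\circ\hat\mu_\sigma$. A direct computation then gives
\[
\acc\sigma\varphi\;=\;\acc\sigma\kappa\circ\acc\sigma\Phi\circ\acc\sigma\kappa^{-1}\;=\;\kappa\circ\phi(\beta(\sigma))^{-1}\circ\bigl(\hat\mu_\sigma\circ\acc\sigma\Phi\circ\hat\mu_\sigma^{-1}\bigr)\circ\phi(\beta(\sigma))\circ\kappa^{-1}.
\]
The inner term simplifies because of the compatibility of $\mu_\sigma$: applying the identity $\mu_\sigma\circ\acc\sigma\psi\circ\mu_\sigma^{-1}=\psi$ entry by entry to a matrix $\Phi=(\varphi_{ij})\in\M_n(\cB)$ shows that $\hat\mu_\sigma\circ\acc\sigma\Phi\circ\hat\mu_\sigma^{-1}=\Phi$. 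Consequently
\[
\acc\sigma\varphi\;=\;\kappa\circ\phi(\beta(\sigma))^{-1}\circ\Phi\circ\phi(\beta(\sigma))\circ\kappa^{-1},
\]
so $\acc\sigma\varphi=\varphi$ is equivalent to $\Phi\circ\phi(\beta(\sigma))=\phi(\beta(\sigma))\circ\Phi$.

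Requiring this for every $\sigma\in G_k$ means that $\Phi$ commutes with $\phi(\beta(\sigma))$ for all $\sigma$; since $E_\beta=F(\{\beta(\sigma)\}_{\sigma\in G_k})$ and $F$ lies in the center of $\M_n(\cB)$, this is equivalent to $\Phi$ lying in the centralizer of $\phi(E_\beta)$ in $\M_n(\cB)$. Therefore the map $\varphi\mapsto\Phi$ restricts to an isomorphism $\End_k^0(X_\beta)\xrightarrow{\sim}Z_{\M_n(\cB)}(\phi(E_\beta))$, which is the desired description.

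I do not expect any real obstacle here: the only nontrivial input is the compatibility of the $\mu_\sigma$ with elements of $\cB$ (which upgrades to compatibility of $\hat\mu_\sigma$ with all of $\M_n(\cB)$), together with the defining property of the descent $X_\beta$. The mild point to be careful about is bookkeeping of Galois transforms of composed isogenies, but no new ideas beyond those already used in the proof of Proposition~\ref{prop: definition of X_beta} are required.
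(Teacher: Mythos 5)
Your proof is correct and takes essentially the same route as the paper's: transport endomorphisms to $\M_n(\cB)$ via $\kappa$, use the descent relation $\kappa^{-1}\circ\acc\sigma\kappa=\phi(\beta(\sigma))^{-1}\circ\hat\mu_\sigma$ together with the compatibility of $\hat\mu_\sigma$ to reduce Galois invariance of $\varphi$ to the condition that $\Phi$ commutes with every $\phi(\beta(\sigma))$, and hence with $E_\beta$ since these elements generate it over the central field $F$.
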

\begin{proof}
 $\End_\kb^0(X_\beta)$ is isomorphic to $\M_n(\cB)$ and  every
endomorphism of $X_\beta$ up to $\kb$-isogeny is of the form $\kappa\circ
\psi\circ \kappa^{-1}$, for some $\psi\in \End_\kb^0(B^n)$. For $\sigma$ in
$G_k$ we have:
\begin{eqnarray*}
{^\sigma(\kappa \circ \psi\circ \kappa^{-1})}=\kappa\circ \psi\circ
\kappa^{-1} &\iff& {^\sigma\kappa}\circ {^\sigma\psi}\circ
{^\sigma\kappa^{-1}}=\kappa \circ\psi \circ\kappa^{-1} \\&\iff&
\kappa^{-1}\circ{^\sigma\kappa}\circ{^\sigma \psi}
\circ(\kappa^{-1}\circ{^\sigma\kappa})^{-1}=\psi \\ &\iff &
\beta(\sigma)\circ{\hat{\mu}_\sigma}\circ{^\sigma\psi}\circ
{\hat{\mu}_\sigma}^{-1}\circ\beta(\sigma)^{-1}=\psi \\&\iff& \beta(\sigma)\circ\psi\circ\beta(\sigma)^{-1}=\psi.\\
\end{eqnarray*}
Thus the endomorphisms of $X_\beta$ defined over $k$ are exactly the ones
coming from endomorphisms $\psi$ that commute with $\beta(\sigma)$, for all
$\sigma$ in $ G_k$. Now the proposition is clear, since the
$\beta(\sigma)$'s generate $E_\beta$.
\end{proof}
\begin{corollary}\label{corollary: k-rational endomorphisms of X_beta}
The algebra $\End_k^0(X_\beta)$ is isomorphic to $E_\beta\otimes_F \cB $.
\end{corollary}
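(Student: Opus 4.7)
The plan is to combine Proposition \ref{prop: k-rational endomorphisms of X_beta} with a direct computation of centralizers inside the central simple $F$-algebra $\M_n(\cB)$. By that proposition, $\End_k^0(X_\beta)$ is isomorphic to the centralizer of $\phi(E_\beta)$ in $\M_n(\cB)$, so the task reduces to identifying this centralizer with $E_\beta\otimes_F\cB$ as an $F$-algebra.

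First I would use the natural identification $\M_n(\cB)\cong \M_n(F)\otimes_F\cB$, under which $\phi(E_\beta)$ sits inside the factor $\M_n(F)\otimes 1$ by construction of $\phi$. Since the two tensor factors commute elementwise, expanding an arbitrary element of $\M_n(F)\otimes_F\cB$ in an $F$-basis of $\cB$ shows that it centralizes $\phi(E_\beta)\otimes 1$ if and only if each of its $\M_n(F)$-coordinates does. This yields the tensor splitting
\[
Z_{\M_n(\cB)}\bigl(\phi(E_\beta)\bigr)\;=\;Z_{\M_n(F)}\bigl(\phi(E_\beta)\bigr)\otimes_F\cB.
\]

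Next I would invoke the standard fact that a commutative $F$-subalgebra of a central simple $F$-algebra of degree $n$ whose $F$-dimension equals $n$ is self-centralizing (a consequence of the double centralizer theorem, or of a straightforward dimension count via $\dim_F Z_A(E)=\dim_F A/\dim_F E$ combined with $E\subseteq Z_A(E)$). Since $n$ was chosen to equal $[E_\beta:F]$ and $\M_n(F)$ has degree $n$ over $F$, this gives $Z_{\M_n(F)}(\phi(E_\beta))=\phi(E_\beta)\cong E_\beta$. Substituting into the display above produces the desired isomorphism $\End_k^0(X_\beta)\cong E_\beta\otimes_F\cB$.

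There is no substantial obstacle beyond the previous proposition; the only ingredient is the self-centralizing property of a maximal subfield, and the main care is simply to keep track of the identifications $\M_n(\cB)=\M_n(F)\otimes_F\cB$ and $\phi(E_\beta)\subseteq \M_n(F)\otimes 1$.
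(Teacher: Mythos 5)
Your proof is correct and rests on the same key ingredients as the paper's: the identification $\M_n(\cB)\cong\M_n(F)\otimes_F\cB$ and a dimension count via the Double Centralizer Theorem. The only organizational difference is that you split the centralizer as $Z_{\M_n(F)}(\phi(E_\beta))\otimes_F\cB$ first and then apply the self-centralizing property of $E_\beta$ inside $\M_n(F)$, whereas the paper exhibits commuting copies of $E_\beta$ and $\cB$ inside the centralizer $C$ and matches $[C:F]=nt^2$ directly in $\M_n(\cB)$; both are valid and essentially equivalent.
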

\begin{proof}
Let $C$ be the centralizer of $E_\beta$ in $\M_n(\cB)$. In view of
Proposition \ref{prop: k-rational endomorphisms of X_beta} we have to prove
that $C\simeq E_\beta\otimes_F \cB$. It is clear that $E_\beta$ is contained
in $C$. Moreover, $\cB$ is contained in $C$ because the elements of
$E_\beta$ can be seen as $n\times n$ matrices with entries in $F$, and these
matrices commute with $\cB$ (which is identified with the diagonal matrices
in $\M_n(\cB)$). Since $E_\beta$ and $\cB$ commute there exists a subalgebra
of $C$ isomorphic to $E_\beta\otimes_F \cB$, which has dimension $nt^2$ over
$F$. By the Double Centralizer Theorem we know that
\[
[C:F][E_\beta:F]=[\M_n(\cB):F]=n^2t^2,
\]
and from this we obtain  that $[C:F]=nt^2$, hence $C$ is isomorphic to
$E_\beta\otimes_F \cB$.
\end{proof}
At this point we have at our disposal all the tools needed to prove Theorem
\ref{theorem: Ribet-Pyle variety associated to a k-variety}.
\begin{proof}[Proof of Theorem \ref{theorem: Ribet-Pyle variety associated to a k-variety}]
By Corollary \ref{corollary: existence of splitting fields containing roots
of unity} there exists an integer $m$ such that $F(\zeta_m)$ splits $\cB$.
Let $\beta$ be a splitting map for $c_B$ with $E_\beta$ containing
$F(\zeta_m)$; the existence of such a $\beta$ is guaranteed by Lemma
\ref{lemma: existence of splitting fields containing cyclotomic extensions}.
Consider the variety $X_\beta$ defined as in Proposition \ref{prop:
definition of X_beta}. By Corollary \ref{corollary: k-rational endomorphisms
of X_beta} we have that $\End_k^0(X_\beta)\simeq E_\beta\otimes_F \cB$, and
this later algebra is in turn isomorphic to $M_t(E_\beta)$ because $E_\beta$
is a splitting field for $\cB$. Therefore, there exists an abelian variety
$A_\beta$ defined over $k$ such that $X_\beta\sim_k A_\beta^t$ and
$\End_k^0(A_\beta)\simeq E_\beta$. Clearly $A_\beta$ is $\kb$-isogenous  to
$B^{n/t}$, where $n=[E_\beta:F]$, and we claim that it is  a Ribet-Pyle
variety. First of all, it is easily seen that the power of a $k$-variety is
also a $k$-variety. This implies that $(A_\beta)_\kb$ is a $k$-variety.
Moreover, we have that $[ \End_k^0(A_\beta)\colon F]=[E_\beta\colon F]=n$,
and the dimension of the ambient algebra is
 $[
\End_\kb^0(A_\beta)\colon F]=(\frac{n}{t})^2[\cB\colon F]=n^2$. This implies
(cf. \cite[Proposition 13.1]{Pi}) that $\End_k^0(A)$ is  a maximal subfield
of $\End_\kb^0(A)$.
\end{proof}
\begin{proposition}Let $B$ be a $k$-variety and let $A/k$ be a Ribet-Pyle
variety having $B$ as $\kb$-simple factor. Then $A$ is $k$-isogenous to the
variety $A_\beta$ obtained by applying the above procedure to some cocycle
$c_B$ attached to $B$ and some  splitting map $\beta$ for $c_B$.
\end{proposition}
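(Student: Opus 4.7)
The idea is to reverse-engineer a splitting map from $A$ itself: fix a $\kb$-isogeny $\iota\colon B^m\ra A_\kb$ (with $m$ as in Proposition \ref{prop: Ribet-Pyle varieties are isotypical}), measure the failure of $\iota$ to be Galois equivariant, and read off the data. Let $\cB=\End_\kb^0(B)$, $F=Z(\cB)$, and $t$ the Schur index of $\cB$; the Ribet-Pyle hypothesis identifies $E:=\End_k^0(A)$ with a maximal subfield of $\End_\kb^0(A)\simeq\M_m(\cB)$, so $[E:F]=mt$. Fix a locally constant family of compatible isogenies $\mu_\sigma\colon\acc\sigma B\ra B$ and the associated cocycle $c_B$.

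For each $\sigma\in G_k$ I would write $\iota^{-1}\circ\acc\sigma\iota=\lambda_\sigma\circ\hat\mu_\sigma$ uniquely with $\lambda_\sigma\in\M_m(\cB)^*$, where $\hat\mu_\sigma$ denotes the diagonal map. The compatibility of $\mu_\sigma$ applied entry-wise yields $\hat\mu_\sigma\circ\acc\sigma\psi=\psi\circ\hat\mu_\sigma$ for every $\psi\in\M_m(\cB)$, so the $G_k$-action on $\M_m(\cB)$ transported through $\iota$ becomes $\psi\mapsto\lambda_\sigma\psi\lambda_\sigma^{-1}$; its fixed algebra being $E$, each $\lambda_\sigma$ centralizes $E$, and maximality of $E$ forces $\lambda_\sigma\in E^*$. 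Setting $\beta(\sigma)=\lambda_\sigma^{-1}\in E^*\subset\Fb^*$ and expanding $\iota^{-1}\circ\acc{\sigma\tau}\iota=(\iota^{-1}\circ\acc\sigma\iota)\circ\acc\sigma(\iota^{-1}\circ\acc\tau\iota)$ via the factorizations---commuting $\hat\mu_\sigma$ past $\acc\sigma\lambda_\tau$ and using commutativity of $E$---one obtains $c_B(\sigma,\tau)=\beta(\sigma)\beta(\tau)\beta(\sigma\tau)^{-1}$, so $\beta$ is a splitting map for $c_B$.

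Since the centralizer of $\{\lambda_\sigma\}$ in $\M_m(\cB)$ is exactly $E$ (being the Galois-invariant subalgebra), the double centralizer theorem forces $E_\beta=F(\{\lambda_\sigma\})$ to coincide with $E$, so $n:=[E_\beta:F]=mt$. Taking $\phi$ to be the $t$-fold diagonal embedding $E\hookrightarrow\M_m(\cB)\hookrightarrow\M_n(\cB)=\M_t(\M_m(\cB))$, the isogeny $\iota^t\colon B^n\ra A^t$ then satisfies exactly the defining relation $(\iota^t)^{-1}\circ\acc\sigma(\iota^t)=\phi(\beta(\sigma))^{-1}\circ\hat\mu_\sigma$ of $X_\beta$ from Proposition \ref{prop: definition of X_beta}. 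The independence-from-$\phi$ statement proved there (whose Skolem--Noether argument applies verbatim to embeddings landing in $\M_n(\cB)$, since the compatibility identity already holds for all of $\M_n(\cB)$) yields $A^t\sim_k X_\beta\sim_k A_\beta^t$, and $k$-simplicity of $A$ and $A_\beta$ (each has a field as its $k$-endomorphism algebra) forces $A\sim_k A_\beta$.

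The main obstacle is the extraction step: producing $\lambda_\sigma\in E^*$ simultaneously uses the compatibility of the $\mu_\sigma$ (to trivialize the conjugation-$\sigma$ action on $\M_m(\cB)$) and the maximal-subfield hypothesis on $E$ (to force the centralizer of the $\lambda_\sigma$ to equal $E$). Once $\beta$ has been extracted, matching $A^t$ against $X_\beta$ through Proposition \ref{prop: definition of X_beta} is a formal verification.
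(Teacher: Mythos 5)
Your proposal is correct and follows essentially the same route as the paper: fix an isogeny $B^m\ra A_\kb$, extract $\beta(\sigma)$ from its failure to be Galois-equivariant, use compatibility of the $\mu_\sigma$ plus maximality of $E$ to place $\beta(\sigma)$ in $E^*$, identify $E_\beta=E$ by the double centralizer theorem, and match $A^t$ with $X_\beta$ via the uniqueness statement of Proposition \ref{prop: definition of X_beta}. Your explicit remark that the Skolem--Noether/uniqueness argument works for embeddings landing in $\M_n(\cB)$ rather than $\M_n(F)$ is a point the paper glosses over, and is a welcome bit of extra care rather than a genuinely different method.
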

\begin{proof}
 Let $\cB=\End_\kb^0(B)$, let $F$ be the center of $\cB$ and let $t$ be the Schur
index of $\cB$. Let $E$ be the maximal subfield $\End_k^0(A)$ of
$\End_\kb^0(A)$, and fix an embedding  of $E$ into $\Fb$. Let $\kappa$ be an
isogeny $\kappa\colon B^n\ra A_\kb$.  We have the relation $[E:F]=nt$. Let
$\{\mu_\sigma\colon \acc\sigma B\ra B\}_{\sigma\in G_k}$ be a locally
constant set of compatible isogenies and denote by $\hat \mu_\sigma\colon
\acc\sigma B^n\ra B^n$ the diagonal of $\mu_\sigma$. Define
$\beta(\sigma)=\kappa\circ \hat\mu_\sigma\circ\acc\sigma\kappa^{-1}$, which
is a compatible isogeny $\beta(\sigma)\colon A_\kb\ra A_\kb$. The fact that
$\beta(\sigma)$ is compatible implies that
\begin{equation}\label{eq: C(Ebeta) inside E}\beta(\sigma)\circ\varphi=\acc\sigma\varphi\circ\beta(\sigma)\end{equation} for all
$\sigma$ in $G_k$ and for all $\varphi \in \End_\kb^0(A)$. In particular,
when applied to elements $\varphi$ of $E$ this property says that
$\beta(\sigma)$ lies in $C(E)$, the centralizer of $E$. But $C(E)$ is equal
to $E$, because $E$ is a maximal subfield. Thus $\beta(\sigma)$ belongs to
$E$ and it is an isogeny defined over $k$. Now we have that
\begin{eqnarray*}c_B(\sigma,\tau)&=&\mu_\sigma\circ\acc\sigma\mu_\tau\circ\mu_{\sigma\tau}^{-1}=
\hat{\mu}_\sigma\circ\acc\sigma{\hat{\mu}_\tau}\circ\hat{\mu}_{\sigma\tau}^{-1}\\&=&\beta(\sigma)\circ\acc\sigma\beta(\tau)\circ\beta(\sigma\tau)^{-1}=
\beta(\sigma)\circ\beta(\tau)\circ\beta(\sigma\tau)^{-1},\end{eqnarray*} and
we see that the map $\sigma\mapsto \beta(\sigma)$ is  a splitting map for
$c_B$. We have already seen the inclusion $E_\beta\subseteq E$.  From
\eqref{eq: C(Ebeta) inside E} it is clear that $C(E_\beta)\subseteq E$, and
taking centralizers and applying the Double Centralizer Theorem we have that
$E=C(E)\subseteq C(C(E_\beta))=E_\beta$. Thus $E=E_\beta$ and, in
particular, $[E_\beta:F]=nt$.

Now we define a $\kb$-isogeny $\hat\kappa\colon  (B^n)^t\ra A_\kb^t$ as the
diagonal isogeny associated to $\kappa$, and we  make $E_\beta$ act on
$B^{nt}$ by means of $\hat\kappa$. It is easy to check that
$\hat\kappa^{-1}\circ\acc\sigma{\hat\kappa}=\hat\kappa^{-1}\circ
\beta(\sigma)^{-1}\circ \hat\kappa\circ \hat\mu_\sigma$, so $A^t$ satisfies
the property defining $X_\beta$. By the uniqueness property of $X_\beta$ we
have that $A^t\sim_k X_\beta$, and so $A_\beta\sim_k A $.
\end{proof}
\begin{remark}
The hypothesis  that $k$ is a number field has been used only in order to
guarantee the existence of splitting maps for $c_B$, by means of Tate's
theorem on the triviality of $H^2(G_k,\Fb^\times)$. Since Tate's theorem is
valid for any global or local field $k$, Theorem \ref{theorem: main theorem}
is valid for any global or local field $k$ as well.
\end{remark}

\section{Varieties over $k$ of $\GL_2$-type and $k$-varieties}\label{section: Varieties over $k$ of GL_2-type and k-varieties}
Let $k$ be a number field. In this section we characterize the absolutely
simple factors of the varieties over $k$ of $\GL_2$-type, in the case where
they do not have complex multiplication.
\begin{proposition}\label{proposition: GL2-type varieties are ribet-pyle}
Let $A/k$ be an abelian variety of $\GL_2$-type such that $A_\kb$ does not
have complex multiplication. Then $A$ is a Ribet-Pyle variety.
\end{proposition}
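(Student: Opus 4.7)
To show that $A$ is a Ribet--Pyle variety I must verify two things: that $A_\kb$ is an abelian $k$-variety, and that $E := \End_k^0(A)$ is a maximal subfield of $\cA := \End_\kb^0(A)$. The plan is to use the $\GL_2$-type hypothesis together with the Albert classification of endomorphism algebras to pin down the structure of $\cA$, and then to read off both conditions. I would begin by noting that $A_\kb$ is isotypic: if $A_\kb$ had several pairwise non-$\kb$-isogenous simple factors, then $\cA$ would decompose as a product of matrix algebras with orthogonal central idempotents, and the embedding of the field $E$ into this product would have to factor through a single factor (since $E$ has no nontrivial idempotents) while also sending $1$ to $1$, forcing a single factor to be present. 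So $A_\kb \sim B^n$ with $B$ simple; set $D = \End_\kb^0(B)$, $F = Z(D)$, and let $t$ denote the Schur index of $D$, so that $\cA \simeq \M_n(D)$ is a central simple $F$-algebra of dimension $n^2 t^2$ over $F$.

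The next step is a dimension count based on the standard fact that any commutative $F$-subalgebra of a central simple $F$-algebra of degree $nt$ has $F$-dimension at most $nt$. For any subfield $K \subseteq \cA$ the compositum $KF$ is such a subalgebra, so $[K:\Q] \leq [KF:\Q] \leq nt[F:\Q]$. Applied to $E$, together with the $\GL_2$-type identity $[E:\Q] = \dim A = n\dim B$, this yields $\dim B \leq t[F:\Q]$. On the other hand, the hypothesis that $A_\kb$ has no complex multiplication forces $B$ to be of Albert type I, II, or III, and in each of these non-CM cases the classification of endomorphism algebras of simple abelian varieties gives the divisibility $t[F:\Q] \mid \dim B$. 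The inequality and the divisibility together force $\dim B = t[F:\Q]$, and hence $[E:\Q] = nt[F:\Q]$. Since $EF$ is also a commutative $F$-subalgebra of $\cA$, the same bound gives $[EF:\Q] \leq nt[F:\Q] = [E:\Q]$; combined with $E \subseteq EF$, this forces $EF = E$, so $F \subseteq E$ and $[E:F] = nt$, exhibiting $E$ as a maximal subfield of $\cA$.

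It remains to produce the compatible isogenies. The group $G_k$ acts on $\cA$ by $\Q$-algebra automorphisms with fixed subalgebra $E$; since $F \subseteq E$, each such automorphism restricts to the identity on $F$, so it is an $F$-algebra automorphism of the central simple $F$-algebra $\cA$. By the Skolem--Noether theorem this automorphism is inner: for each $\sigma \in G_k$ there exists $\mu_\sigma \in \cA^\times$ such that $\acc\sigma\varphi = \mu_\sigma^{-1}\circ \varphi\circ \mu_\sigma$ for every $\varphi \in \cA$. Identifying $\acc\sigma A_\kb$ with $A_\kb$ (as is possible because $A$ is defined over $k$), $\mu_\sigma$ is then exactly a compatible isogeny in the sense of Definition \ref{definition: abelian k-varieties}, so $A_\kb$ is an abelian $k$-variety. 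The crux of the argument is the non-CM input: it is the Albert divisibility $t[F:\Q] \mid \dim B$ that upgrades the a priori inequality $\dim B \leq t[F:\Q]$ to an equality, and this equality simultaneously makes $E$ a maximal subfield and puts $F$ inside $E$; the Skolem--Noether step afterwards is essentially formal.
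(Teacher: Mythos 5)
There are two genuine gaps. The first is in your isotypicity argument. You claim that, because $E$ has no nontrivial idempotents, a unital embedding of $E$ into $\M_{n_1}(D_1)\times\cdots\times\M_{n_r}(D_r)$ must factor through a single factor. This is false: composing with each projection gives a unital, hence injective, homomorphism to each factor, and a field embeds perfectly well ``diagonally'' into a product (e.g.\ $\Q(i)\hookrightarrow \M_2(\Q)\times\M_2(\Q)$); the absence of idempotents in $E$ says nothing about the central idempotents of the ambient algebra. In fact isotypicity genuinely requires the no-CM hypothesis --- $\Res_{K/\Q}C$, for $C/K$ a CM elliptic curve not isogenous to its Galois conjugate, is a non-isotypic abelian surface of $\GL_2$-type --- so no purely formal argument can succeed. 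The paper's route is: after using Shimura's result to rule out CM simple factors, $E$ acts on each isotypic component $B_i^{n_i}$, hence on $H_1((B_i^{n_i})_\C,\Q)$, so $[E:\Q]=\dim A$ divides $2\dim B_i^{n_i}$; the case $\dim B_i^{n_i}=\tfrac12\dim A$ is excluded because it would make $B_i^{n_i}$ a CM factor, leaving $\dim B_i^{n_i}=\dim A$ and hence a single component.

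The second gap is the assertion that absence of CM forces $B$ to be of Albert type I, II or III, from which you extract the divisibility $t[F:\Q]\mid\dim B$. A simple non-CM abelian variety can be of type IV (center $F$ a CM field); the paper explicitly notes after Definition \ref{def: building-k-block} that for a general number field $k$ the center may be CM, and only excludes this when $k$ has a real embedding. For type IV the Albert restriction reads $e_0d^2\mid\dim B$ with $[F:\Q]=2e_0$, which does not yield $t[F:\Q]\mid\dim B$. The inequality $\dim B\geq t[F:\Q]$ you need can, however, be rescued directly: $[D:\Q]=t^2[F:\Q]$ divides $2\dim B$ because $H_1(B_\C,\Q)$ is a $D$-vector space, so $\dim B<t[F:\Q]$ would force $t=1$ and $[F:\Q]=2\dim B$, i.e.\ $B$ would have CM by $F$. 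With that repair, your bound on the commutative subalgebra $EF$ (which is legitimate since $EF$ is semisimple), the deduction $F\subseteq E$ and $[E:F]=nt$, and the concluding Skolem--Noether step agree with the paper's proof and are correct.
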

\begin{proof}
By \cite[Proposition 1.5]{shimura-class-fields} we can suppose that $A_\kb$
does not have any simple factor with CM. Let $A_\kb\sim B_1^{n_1}\times
\cdots \times B_r^{n_r}$ be the decomposition of $A_\kb$ into simple abelian
varieties up to isogeny. Since $E=\End_k^0(A)$ is a field it acts on each
factor $B_i^{n_i}$, and so it  acts on the homology with rational
coefficients $H_1((B_i^{n_i})_\C,\Q)$, which is a vector space of dimension
$2\dim B_i^{n_i}$ over $\Q$. Thus $2\dim B_i^{n_i}$ is divisible by
$[E:\Q]=\dim A$. But $\dim A\geq \dim B_i^{n_i}$, so either $[E:\Q]=\dim
B_i^{n_i}$ or $2[E:\Q]=\dim B_i^{n_i}$. The later is not possible, because
it would mean that $B_i^{n_i}$ has CM by $E$. Thus $\dim A=\dim B_i^{n_i}$
and $A_\kb$ has only one simple factor up to isogeny; say $A_\kb\sim B^n$.

Next, we see that $E$ is a maximal subfield of $\End_\kb^0(A)$. Let $C$ be
the centralizer of $E$ in $\End_\kb^0(A)$, and let $\varphi$ be an element
in $C$. A priori $\varphi (A_\kb)$ is isogenous to $B^r$ for some $r\leq n$.
Since $\varphi\in C$, the field $E$ acts on $\varphi(A_\kb)$; as before this
implies that $[E:\Q]$ divides $2\dim B^r$. But $[E:\Q]=\dim A=\dim B^n$,
therefore $r=n$ or $r=n/2$. Again $r=n/2$ is not possible, because then
$B^r$ would be a factor of $A_\kb$ with CM by $E$. Thus $r=n$ and $\varphi$
is invertible in $\End_\kb^0(A)$. This implies that $C$ is a field, and then
$E$ is a maximal subfield of $\End_\kb^0(B)$.

Finally, we see that $A_\kb$ is an abelian $k$-variety. For each $\sigma \in
G_k$ the map
\begin{equation}\label{equation: F-algebra endomorphism}
\begin{array}{ccc}
\End_\kb^0(A) & \lra & \End_\kb^0(A)\\
\varphi & \longmapsto & \acc\sigma \varphi
\end{array}
\end{equation}
is the identity when restricted to $E$. Since $E$ is a maximal subfield, it
contains the center $F$ of $\End_\kb^0(A)$, so \eqref{equation: F-algebra
endomorphism} is a $F$-algebra automorphism. By the Skolem-Noether Theorem
there exists an element $\mu_\sigma$ in $\End_\kb^0(A)^*$ such that
$\acc\sigma \varphi=\mu_\sigma^{-1}\circ \varphi\circ \mu_\sigma$, and we
see that $\mu_\sigma$ is a compatible isogeny in the sense of Definition
\ref{definition: abelian k-varieties}.
\end{proof}

\begin{definition}\label{def: building-k-block}
A \emph{building $k$-block} is an abelian $k$-variety $B/\kb$ such that
$\End_\kb^0(B)$ is a central division algebra over a field $F$, with Schur
index $t\leq 2$ and reduced degree $t[F:\Q]=\dim B$.
\end{definition}

\begin{theorem}
Let $k$ be a number field and let $A/k$ be an abelian variety of
$\GL_2$-type such that $A_\kb$ does not have CM. Then $A_\kb\sim B^n$ for
some building $k$-block $B$. Conversely, if $B$ is a building $k$-block then
there exists a variety $A/k$ of $\GL_2$-type such that $A_\kb\sim B^n$ for
some $n$.
\end{theorem}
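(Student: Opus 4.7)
The plan is to deduce both halves of the theorem from results already established in the paper, essentially bookkeeping the numerical invariants of $\End_\kb^0$ that distinguish \emph{building $k$-blocks} among all simple abelian $k$-varieties. The only real work is translating the $\GL_2$-type condition into the dimension identity $t[F\colon\Q]=\dim B$ (and vice versa), together with invoking the Albert classification to get the Schur index bound.

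For the forward direction, let $A/k$ be of $\GL_2$-type with $A_\kb$ non-CM. Proposition \ref{proposition: GL2-type varieties are ribet-pyle} says that $A$ is a Ribet-Pyle variety, so Theorem \ref{theorem: main theorem} gives an abelian $k$-variety $B/\kb$ and an integer $n$ with $A_\kb\sim B^n$, with $B$ simple (as produced in the proof of Proposition \ref{prop: Ribet-Pyle varieties are isotypical}). Write $\cB=\End_\kb^0(B)$, $F=Z(\cB)$, and $t$ for the Schur index of $\cB$. Since $A_\kb$ has no CM, neither does its simple factor $B$, so by Albert's classification $\cB$ is of type I, II, or III, whence $t\leq 2$. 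For the dimension condition I would use that $E=\End_k^0(A)$ is a maximal subfield of $\End_\kb^0(A)\simeq \M_n(\cB)$, so $[E\colon F]=nt$; combined with the $\GL_2$-type identity $[E\colon\Q]=\dim A=n\dim B$, this yields $nt[F\colon\Q]=n\dim B$, i.e.\ $t[F\colon\Q]=\dim B$. Thus $B$ is a building $k$-block.

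For the converse, let $B$ be a building $k$-block with invariants $(\cB,F,t)$. Since $B$ is in particular an abelian $k$-variety, Theorem \ref{theorem: Ribet-Pyle variety associated to a k-variety} applies; in fact I would use the specific construction from its proof. By Corollary \ref{corollary: existence of splitting fields containing roots of unity} there is an integer $m$ with $F(\zeta_m)$ a splitting field for $\cB$, and Lemma \ref{lemma: existence of splitting fields containing cyclotomic extensions} supplies a splitting map $\beta$ for $c_B$ with $F(\zeta_m)\subseteq E_\beta$. Setting $n=[E_\beta\colon F]$, the construction produces a Ribet-Pyle variety $A_\beta/k$ satisfying $A_\beta\sim_\kb B^{n/t}$ and $\End_k^0(A_\beta)\simeq E_\beta$. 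A direct dimension computation using $\dim B=t[F\colon\Q]$ gives
\[
\dim A_\beta=\tfrac{n}{t}\dim B=\tfrac{n}{t}\cdot t[F\colon\Q]=n[F\colon\Q]=[E_\beta\colon\Q],
\]
so $\End_k^0(A_\beta)$ is a number field of degree equal to $\dim A_\beta$; that is, $A_\beta$ is of $\GL_2$-type, with $(A_\beta)_\kb\sim B^{n/t}$.

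There is no genuine obstacle here; the argument is a matter of correctly matching invariants. The one point that deserves care is the Schur index bound in the forward direction: it does not come from the Ribet-Pyle machinery itself but from the non-CM hypothesis together with Albert's classification, so I would make sure to cite that explicitly rather than try to extract $t\leq 2$ from the endomorphism-algebra computation.
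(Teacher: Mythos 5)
Your overall architecture matches the paper's proof exactly: forward direction via Proposition \ref{proposition: GL2-type varieties are ribet-pyle}, Proposition \ref{prop: Ribet-Pyle varieties are isotypical}, and the maximal-subfield count $[E:F]=nt$ giving $t[F:\Q]=\dim B$; converse via Theorem \ref{theorem: Ribet-Pyle variety associated to a k-variety} and the same count run backwards. Both of those computations are correct as you wrote them.

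However, your justification of the Schur index bound $t\leq 2$ is a genuine error, and ironically it is the one step you single out as needing care. You claim that since $B$ has no CM, Albert's classification puts $\cB$ in type I, II, or III, whence $t\leq 2$. This is false: the absence of CM does not exclude type IV. A simple abelian variety has CM only when it admits a commutative subalgebra of $\End^0$ of degree $2\dim B$ over $\Q$; a division algebra with CM center (type IV) of reduced degree smaller than $2\dim B$ is entirely compatible with the non-CM hypothesis (e.g.\ an imaginary quadratic field acting on a simple abelian variety of dimension $>1$), and type IV algebras can a priori have Schur index $t>2$. Indeed the paper itself stresses, right after this theorem, that for general $k$ the center $F$ of a building $k$-block may be a CM field, i.e.\ $\cB$ may be of type IV. The bound $t\leq 2$ should instead come from the ``endomorphism-algebra computation'' you propose to avoid: $H_1(B_\C,\Q)$ is a module over the division algebra $\cB$, so its $\Q$-dimension $2\dim B=2t[F:\Q]$ must be a multiple of $[\cB:\Q]=t^2[F:\Q]$, forcing $t\mid 2$. (Alternatively one could combine $t[F:\Q]=\dim B$ with Shimura's numerical restrictions for type IV, but Albert's classification alone, plus non-CM, does not suffice.) With that step replaced, the rest of your argument goes through and coincides with the paper's.
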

\begin{proof}
By Propostion \ref{proposition: GL2-type varieties are ribet-pyle} $A$ is a
Ribet-Pyle variety, and by Proposition \ref{prop: Ribet-Pyle varieties are
isotypical} we have that $A_\kb\sim B^n$ for some $k$-variety $B$. Let
$\cB=\End_\kb^0(B)$, let $F$ be the center of $\cB$ and let $t$ be its Schur
index. Then $E=\End_k^0(A)$ is a maximal subfield of $\End_\kb^0(A)\simeq
\M_n(\cB)$, which has dimension $n^2t^2$ over $F$. Therefore $ [E:F]=nt$,
and multiplying both sides of this equality by $[F:\Q]$ we see that
$[E:\Q]=\dim A= nt[F:\Q]$. The equality $t[F:\Q]=\dim B$ follows. Since
$\cB$ is a division algebra of $\Q$-dimension $t^2[F:\Q]$ that acts on
$H_1(B_\C,\Q)$, which has $\Q$-dimension $2\dim B=2t[F:\Q]$, we see that
necessarily $t\leq 2$ and $B$ is a building $k$-block.

Conversely, let $B$ be a building $k$-block. In particular it is a
$k$-variety, and by Theorem \ref{theorem: Ribet-Pyle variety associated to a
k-variety} there exists a Ribet-Pyle variety $A/k$ such that $A_\kb\sim B^n$
for some $n$. The field $E=\End_k^0(A)$ is a maximal subfield of
$\End_\kb^0(A)\simeq \M_n(\cB)$, which means that $[E:F]=nt$. Multiplying
both sides of this equality by $[F:\Q]$ we see that $[E:\Q]=nt[F:\Q]=n\dim
B=\dim A$, and so $A$ is a variety of $\GL_2$-type.
\end{proof}
 In the case $k=\Q$ the center of the
endomorphism algebra of a building $k$-block is necessarily  totally real,
but for arbitrary number fields $k$ a priori it can be either totally real
or CM. That is why in  Definition \ref{def: building-k-block} the field $F$
is not required to be totally real. However, if $k$ admits a real embedding
then exactly the same argument of \cite[Theorem 1.2]{pyle} shows that $F$ is
necessarily  totally real. In addition, there are  some extra  restrictions
on the endomorphism algebra.
\begin{proposition}
Let  $k$ be a number field that admits a real embedding. Let $B$ be a
building $k$-block, let $\cB=\End_\kb^0(B)$ and let $F=Z(\cB)$. Then $F$ is
totally real and $\cB$ is either isomorphic to $F$ or to a totally
indefinite division quaternion algebra over $F$.
\end{proposition}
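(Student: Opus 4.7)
The plan is to adapt the proof of \cite[Theorem 1.2]{pyle} (which handles $k=\Q$) to our setting; as the author remarks, the only use of $k=\Q$ in Pyle's argument is the existence of a complex conjugation in $G_\Q$, and this remains available in $G_k$ as soon as $k$ admits a real embedding. Fix such an embedding $\iota\colon k\hookrightarrow\R$, extend it to $\bar\iota\colon\kb\hookrightarrow\C$, and let $c\in G_k$ be the associated complex conjugation. Since $B$ is a $k$-variety, there is a compatible isogeny $\mu_c\colon\acc{c}{B}\to B$. Viewing $B$ as a complex abelian variety via $\bar\iota$, $\acc{c}{B}$ is the complex-conjugate variety $\bar B$, and $\mu_c$ corresponds to an antiholomorphic self-isogeny $J\colon B\to B$ that commutes with every $\varphi\in\cB=\End_\kb^0(B)$ (by the compatibility of $\mu_c$) and satisfies $J\circ J=c_B(c,c)=:\alpha\in F^*$ (after normalizing $\mu_1=\mathrm{id}$).

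To prove $F$ is totally real, I would choose a polarization $\lambda$ of $B$ and replace it by its $c$-averaged version $\tilde\lambda:=\lambda+\mu_c^{-\vee}\circ\acc{c}{\lambda}\circ\mu_c^{-1}$, which is again a polarization of $B$. The associated Rosati involution $\dagger\colon\cB\to\cB$ is a ring involution and therefore preserves the center $F$; the averaging moreover forces $\dagger$ to be equivariant with respect to the ``Galois action'' of $c$ on $\cB$ given by conjugation by $\mu_c$. But this action is trivial on $F$: the compatibility $\mu_c\circ\acc{c}{\varphi}=\varphi\circ\mu_c$ together with the centrality of $\varphi\in F$ yields $\acc{c}{\varphi}=\varphi$, as already used in the proof of Proposition \ref{prop: Ribet-Pyle varieties are isotypical}. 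Hence $\dagger|_F$ is the identity; since a positive involution of a number field is trivial only when the field is totally real, $F$ is totally real.

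In the quaternion case $t=2$, fix a real place $v$ of $F$ and decompose $V:=H_1(B,\R)=\bigoplus_w V_w$ over the real places $w$ of $F$. Since $B$ is a building $k$-block with $t=2$, $H_1(B,\Q)$ is a free rank-one $\cB$-module, so $V_v$ is free of rank one over $\cB_v:=\cB\otimes_{F,v}\R$ and $\End_{\cB_v}(V_v)\cong\cB_v^{\opp}$. The restrictions $I_v$ and $J_v$ of the complex structure of $B$ and of $J$ to $V_v$ lie in this commutant and satisfy $I_v^2=-1$, $J_v^2=\alpha_v$ (the $v$-component of $\alpha$) and $I_vJ_v=-J_vI_v$; they thus generate a subalgebra isomorphic to the quaternion algebra $(-1,\alpha_v)_\R$, which by dimension count equals all of $\cB_v^{\opp}$. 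A positivity argument using the Hermitian form attached to $\tilde\lambda$ (namely that $J_v$ is self-adjoint with respect to this form, so $\alpha_v=J_v^*J_v$ is a positive operator) then yields $\alpha_v>0$; hence $\cB_v\cong\cB_v^{\opp}\cong(-1,\alpha_v)_\R\cong M_2(\R)$, and $\cB$ is totally indefinite. The main technical obstacle is exactly this last positivity step, i.e., verifying that $J_v$ is self-adjoint with respect to the averaged polarization; the rest of the argument is formal and mirrors Pyle's.
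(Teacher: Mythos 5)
Your route is genuinely different from the paper's: the paper never works with complex conjugation on $B$ directly, but instead passes to a $\GL_2$-type variety $A/k$ with $A_\kb\sim B^n$, invokes Albert's classification, rules out type III by Shimura's Proposition 15, and rules out type IV by comparing the trace of the analytic representation restricted to $F$ (non-real by Shimura's multiplicity results) with the fact that $F\subseteq E=\End_k^0(A)$ acts by matrices with entries in $k\subseteq\R$ on a $k$-rational basis of differentials. Your approach, adapting Ribet--Pyle's conjugation argument on $B$ itself, is legitimate in principle, but as written it has two genuine gaps.

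First, the deduction that $F$ is totally real is a non sequitur. You argue that the Rosati involution $\dagger$ of the averaged polarization is equivariant for the action of $c$ by conjugation by $\mu_c$, that this action is trivial on $F$, and hence that $\dagger|_F=\mathrm{id}$. But \emph{every} involution is equivariant with respect to a trivial action, so this gives no information: a positive involution restricted to the center is either the identity ($F$ totally real) or complex conjugation ($F$ a CM field), and your argument does not exclude the latter. What actually does the work in the conjugation approach is that $J$ is conjugate-linear on the tangent space and commutes with $F$, so the analytic representation of $F$ is isomorphic to its complex conjugate, i.e.\ $r_\nu=s_\nu$ for all $\nu$ in the type IV case; this contradicts Shimura's Propositions 18--19 given the dimension constraint $t[F:\Q]=\dim B$. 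You need some version of this (or the paper's trace argument), not the equivariance claim. Second, in the quaternion case the entire content of ``totally indefinite versus totally definite'' is concentrated in the sign of $\alpha_v$, i.e.\ in whether $J_v$ is self-adjoint for the positive form attached to $\tilde\lambda$ (equivalently, in the sign in $E_{\tilde\lambda}(Jx,Jy)=\pm v(\alpha)E_{\tilde\lambda}(x,y)$ on $V_v$). With the opposite sign your own computation would yield $(-1,\alpha_v)_\R\cong\mathbb{H}$ and prove the \emph{wrong} statement, so this is not a routine verification that can be deferred; you explicitly leave it open, and until it is carried out (tracking the behaviour of the Riemann form under $\acc{c}{(-)}$ and under the canonical antiholomorphic map $B\to\acc{c}{B}$) the proof is incomplete precisely at its decisive step.
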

\begin{proof}
We view $k$ as a subfield of $\C$ by means of a real embedding
$k\hookrightarrow \R$. Let $A/k$ be a $\GL_2$-type variety such that
$A_\kb\sim B^n$. Let $E$ be the maximal subfield $\End_k^0(A)$ of
$\End_\kb^0(A)$, and identify $F$ with $Z(\End_\kb^0(A))$; under this
identification $F$ is contained in $E$. Let $t$ be the Schur index of $B$
and let $m=2\dim B/[\cB:\Q]$, for which we have that $mt=2$.

The division algebra $\cB$ belongs a priori to one of the four types of
algebras with a positive involution, according to Albert's classification
(see for instance \cite[Proposition 1]{shimura-on-analytic}). However, type
III is not possible; indeed by \cite[Proposition 15]{shimura-on-analytic}
the variety $B$ would then be isogenous to the square of a CM abelian
variety.

To see that type IV is also not possible, suppose that $F$ is a CM extension
of a totally real field $F_0$. Let $\Phi$ denote the complex representation
of $\cB$ on the space of differential forms $H^0(B_\C,\Omega^1)$. For every
real embedding $\nu$ of $F_0$ let $\chi_\nu,\overline\chi_\nu$ be the two
complex-conjugate irreducible representations of $\cB$ extending $\nu$. Let
$r_\nu$ and $s_\nu$ be the multiplicities of $\chi_\nu$ and
$\overline\chi_\nu$ in $\Phi$. For each $\nu$ we have that $r_\nu+s_\nu=2$;
moreover, the equality $r_\nu=s_\nu=1$ is not possible for all $\nu$ (cf.
\cite[Propositions 18 and 19]{shimura-on-analytic}). This implies that
$\mathrm{Tr}(\Phi)_{|F}=\sum r_\nu
{\chi_\nu}_{|F}+s_\nu{\overline\chi_\nu}_{|F}$ takes non-real values. On the
other hand, if we denote by $\Psi$ the complex representation of
$\End^0_\kb(A)$ on $H^0(A_\C,\Omega^1)$, then
$\mathrm{Tr}(\Psi)=n\mathrm{Tr}(\Phi)$. Since $A$ is defined over $k$ we can
take a basis of the differentials defined over $k$, and with respect to this
basis the elements of $E$ are represented by matrices with coefficients in
$k$. Since $F\subseteq E$, the trace of $\Psi$ restricted to $F$ takes
values in $k\subseteq \R$, giving a contradiction with the fact that
$\mathrm{Tr}(\Phi)_{|F}$ takes non-real values.
\end{proof}

\subsection*{Acknowledgements} I am grateful to my advisor,
Professor Jordi Quer, for his help and guidance throughout this
work. I would also like to thank Francesc Fité for carefully
reading  a previous version of this manuscript.

%---------------------------------------------------------------------------------------------------

\end{document}